\newtheorem{theorem}{Theorem}
\newtheorem{cor}{Corollary} 
\newtheorem{lem}{Lemma}
\newtheorem{obs}{Observation}
\theoremstyle{definition}
\theoremstyle{remark}
\newcommand{\problemtitle}[1]{\gdef\@problemtitle{#1}}
\newcommand{\probleminput}[1]{\gdef\@probleminput{#1}}
\newcommand{\problemquestion}[1]{\gdef\@problemquestion{#1}}
  \par\addvspace{.5\baselineskip}
  \par\addvspace{.5\baselineskip}
\newcommand{\X}{\mathcal{X}}
\newcommand{\cU}{\mathcal{U}}
\title{Recoverable Robust Representatives Selection Problems with Discrete Budgeted Uncertainty}
    \author[1]{Marc Goerigk\footnote{marc.goerigk@uni-siegen.de}}
    \author[2,3]{Stefan Lendl\footnote{lendl@math.tugraz.at; corresponding author}}
    \author[2]{Lasse Wulf\footnote{wulf@math.tugraz.at}}
	\affil[1]{Network and Data Science Management, University of Siegen, Unteres Schloß 3, 57072 Siegen, Germany}	
	\affil[2]{Institute of Discrete Mathematics, Graz University of Technology, Steyrergasse 30/II, 8010 Graz, Austria}
  \affil[3]{Institute of Operations und Information Systems, University of Graz, Universitätsstraße 15, 8010 Graz, Austria}
	\date{}
\begin{document}
\maketitle

\begin{abstract} 
Recoverable robust optimization is a multi-stage approach, in which it is possible to adjust a first-stage solution after the uncertain cost scenario is revealed. We analyze this approach for a class of selection problems. The aim is to choose a fixed number of items from several disjoint sets, such that the worst-case costs after taking a recovery action are as small as possible. The uncertainty is modeled as a discrete budgeted set, where the adversary can increase the costs of a fixed number of items.

While special cases of this problem have been studied before, its complexity has remained open. In this work we make several contributions towards closing this gap. We show that the problem is NP-hard and identify a special case that remains solvable in polynomial time. We provide a compact mixed-integer programming formulation and two additional extended formulations. Finally, computational results are provided that compare the efficiency of different exact solution approaches.
\end{abstract}

\noindent\textbf{Keywords:} robustness and sensitivity analysis; robust optimization; discrete budgeted uncertainty; combinatorial optimization; selection problems

\noindent\textbf{Funding:} This work was supported by the Austrian Science Fund (FWF): W1230.

\section{Introduction}\label{sec:intro}

Most optimization problems in practice are uncertain. To handle such problems under uncertainty, a vibrant field of research has been developed, including such approaches as fuzzy optimization (see e.g. \cite{lodwick2010fuzzy}), stochastic programming (see e.g. \cite{birge2011introduction}), or robust optimization (see e.g. \cite{kasperski2016robust}). Treating uncertainty in an optimization problem typically increases its computational complexity, which means that a problem that is simple under known problem data may become challenging when the data is not known precisely.

In this paper we consider one such problem, which is the representatives multi-selection problem (RMSP). We are given several disjoint sets of items, and need to choose a specified number of items from each set. The aim is to minimize a linear cost function in the items. More formally, the problem can be described as
\[ \min_{\pmb{x}\in\X} \pmb{c}^t \pmb{x} \]
with
\[ \X = \{ \pmb{x}\in\{0,1\}^n : \sum_{i\in T_j} x_i = p_j\ \forall j\in[K]\} \]
and $T_1 \cup \ldots \cup T_K = [n]$ forming a partition of the item set into disjoint sets called parts, where we use the notation $[n]=\{1,2,\ldots,n\}$. We will also write $n_j = |T_j|$ to denote the size of part $j\in[K]$. The special case of $K=1$ is known as the selection problem (see \cite{kasperski2015robust}), while the case with $p_j=1$ has been studied as the representatives selection problem (see \cite{kasperski2015approximability}) or weighted disjoint hitting set problem (see \cite{busing2011phd}). Note that it is easy to solve this problem; sorting items of each part by their costs is already sufficient.

To follow a robust optimization approach for this problem, we need to specify an uncertainty set $\cU$ containing all cost scenarios against which we wish to prepare. The classic (single-stage, min-max) approach is then to consider the problem
\[ \min_{\pmb{x}\in\X} \max_{\pmb{c}\in\cU} \pmb{c}^t\pmb{x} \]
A drawback of this approach is that it does not incorporate the possibility to react once scenario information becomes available. To alleviate this, two-stage approaches have been introduced, in particular adjustable robust optimization (see \cite{yanikouglu2019survey}) and recoverable robust optimization (see \cite{liebchen2009concept}). In the latter approach, we fix a complete solution in the first stage, and can slightly adjust it after the scenario has been revealed.

Different types of uncertainty sets have been proposed in the literature, including discrete uncertainty, interval uncertainy, or ellipsoidal uncertainty (see \cite{goerigk2016algorithm}). Particularly successful has been the so-called budgeted uncertainty as first introduced in~\cite{bertsimas2003robust}, where only a bounded number of cost coefficients can deviate from their nominal values. That is, the set of possible cost scenarios is given as
\[ \cU = \{ \pmb{c}\in\mathbb{R}^n : c_i = \underline{c}_i + d_i \delta_i, \delta_i\in\{0,1\}, \sum_{i\in[n]} \delta_i \le \Gamma \} \]
for some integer $\Gamma$. We define $\overline{c}_i = \underline{c}_i + d_i$.
We refer to this type of uncertainty as discrete budgeted uncertainty. It is also possible to define continuous budgeted uncertainty, where we allow the deviations $\delta_i$ to be continuous within the interval $[0,1]$. Discrete budgeted uncertainty hence contains only the extreme points of the continuous budgeted uncertainty polyhedron. In the case of single-stage robust optimization, the continuous and discrete variants are equivalent and the robust counterpart can be solved in polynomial time if this is the case for the underlying nominal problem; both observations do not necessarily hold for two-stage robust optimization.

We use a recoverable robust approach, where we buy a solution in the first stage with known costs $\pmb{C}$, and then can adjust this solution after the uncertain second-stage costs have been revealed. 
We denote the maximum number of items that can be changed by $k$.
Let $\Delta(\pmb{x},\pmb{y}) = \sum_{i\in[n]} |x_i-y_i|$ denote the Hamming distance, and let $R(\pmb{x}) = \{ \pmb{y}\in\X : \Delta(\pmb{x},\pmb{y})\le 2k\}$ be the set of recovery solutions for some integer $k$.
We define the following problems. In the incremental problem, we are given $\pmb{x}$ and $\pmb{c}$, and we solve
\[ \textsc{Inc}(\pmb{x},\pmb{c}) = \min_{\pmb{y}\in R(\pmb{x})} \sum_{i\in[n]} c_i y_i. \]
This represents the recovery step after the solution $\pmb{x}$ has been fixed, and the scenario has been revealed. A layer above this is the adversarial problem, where given $\pmb{x}$, we solve
\[ \textsc{Adv}(\pmb{x}) = \max_{\pmb{c}\in\cU}\ \textsc{Inc}(\pmb{x},\pmb{c}). \]
Finally, the recoverable robust representatives multi-selection problem (RRRMSP) is to solve
\[ \textsc{Rec} = \min_{\pmb{x}\in\X} \sum_{i\in[n]} C_ix_i + \textsc{Adv}(\pmb{x}). \]
We sometimes refer to this as the recoverable robust problem for short. The special case of the recoverable robust representatives selection problem (RRRSP), where $p_j=1$,  was first considered in the PhD thesis of \cite{busing2011phd}. The complexity of the case of discrete budgeted uncertainty was highlighted as being open.

The special case $K=1$, i.e., the recoverable robust selection problem with discrete budgeted uncertainty, was previously considered in~\cite{chassein2018recoverable}. It was shown that the adversarial problem can be solved in polynomial time, and a compact formulation for the recoverable robust problem was derived. The complexity of the problem remained open. So far, neither positive nor negative complexity results for the RRRMSP or RRRSP have been derived, despite the problems being open for nearly 10 years.

Other variants of robust selection problems have been considered in the literature as well. In \cite{averbakh2001complexity}, a polynomial time algorithm was presented for the selection problem with a min-max regret objective and interval uncertainty. The algorithm complexity was further improved in \cite{conde2004improved}. In \cite{dolgui2012min}, min-max and min-max regret representatives selection problem with interval and discrete uncertainty were considered and these results were further refined in \cite{deineko2013complexity}. More recently, \cite{goerigk2019robust} consider two-stage representatives selection problems with general convex uncertainty sets. Furthermore, the setting of recoverable robustness has been applied to other combinatorial problems as well, including knapsack (see \cite{busing2011recoverable}), shortest path (see \cite{busing2012paths}) and spanning tree (see \cite{hradovich2017recoverable-MST}) problems.

A natural assumption one may make about the RRRSP is that dominated items should not be packed in an optimal solution. That is, if there is $i\in T_j$ such that $C_i > C_{i'}$, $\underline{c}_i > \underline{c}_{i'}$ and $\overline{c}_i > \overline{c}_{i'}$ for some other item $i'\in T_j$, then we can assume $x_i=0$ in an optimal solution. We give an example demonstrating that this is in fact not the case, underlining that the recoverable robust problem, despite having a seemingly easy structure, is more complex than it appears.
In Table~\ref{tab:example} we show the data of an example RRRSP with $K=2$, $n_1 = n_2 = 2$ and $p_1 = p_2 = 1$. We further have $\Gamma=k=1$. 

\begin{table}[htb]
\begin{center}
\begin{tabular}{r|rr|rr}
 & \multicolumn{2}{c|}{$T_1$} & \multicolumn{2}{c}{$T_2$} \\
 & 1 & 2 & 3 & 4 \\
 \hline
$C_i$ & 1 & 5 & 8 & 7 \\
$\underline{c}_i$ & 10 & 7 & 9 & 4 \\
$\overline{c}_i$ & 19 & 17 & 19 & 13
\end{tabular}
\end{center}
\caption{Example problem with $\Gamma=k=1$.\label{tab:example}}
\end{table}

A natural candidate solution is to pick items 1 and 4. This choice has first-stage costs of 8. A worst-case scenario is that the costs of item 4 are increased, which forces us to respond by exchanging item 4 for item 3. The second-stage costs are thus $10+9=19$, with total costs $8 + 19 = 27$. Choosing items 2 and 4 results in an objective value of $12+16=28$.

Now consider the solution where we pack items 1 and 3. A worst-case attack is now on item 1, which results in an optimal recovery of exchanging items 1 and 2. The total costs of this solution are $9+16 = 25$. In fact, this is the unique optimal solution to the problem.
Note that item 3 is dominated by item 4, being worse in every cost coefficient.

\emph{Our contribution}. In Section~\ref{sec:adv} we show that it is possible to solve the adversarial problem in polynomial time. In Section~\ref{sec:hardness} we solve a long-standing open problem by showing that the recoverable robust representatives selection problem from \cite{busing2011phd} is NP-hard, even for $p_j=1$ and $n_j=2$. We further show that the recoverable and two-stage robust selection problem variants from \cite{chassein2018recoverable}, where $K=1$, are NP-hard as well. In Section~\ref{sec:special} we show that a special case of the problem can be solved in polynomial time. Here we assume that $n_j=2$, $\Gamma=1$ and $k=1$. That is, each part contains exactly two items, of which one must be chosen. The adversary can increase costs once, and we can recover by exchanging a single item. The idea to prove that this case can be solved in polynomial time is based on the following observation. Consider any min-max problem with $S$ scenarios:
\[ \min_{\pmb{x}\in\X} \max_{s\in[S]} \sum_{i\in[n]} c^s_i x_i\]
This problem can be equivalently written as
\[ \min_{s\in[S]} \min_{\pmb{x}\in\X^s} \sum_{i\in[n]} c^s_i x_i \]
where
\[ \X^s = \{ \pmb{x}\in\X : \sum_{i\in[n]} c^s_i x_i \ge \sum_{i\in[n]} c^{j}_i x_i\ \forall j\in[S] \} \]
that is, we guess the worst-case scenario $s$, but restrict the set of feasible solutions to those where $s$ is indeed the worst case. As far as we are aware, such an approach has not been successfully applied before. 
  We consider problem models in Section~\ref{sec:mips}, where we use insight on the adversarial problem from Section~\ref{sec:adv} to derive a compact mixed integer programming formulation, i.e., a formulation as a mixed integer program of polynomial size. As the number of constraints and variables is of the order $O(n^5)$, we also discuss different iterative solution approaches. 
We present computational experiments in Section~\ref{sec:experiments}, comparing different exact solution approaches developed in this paper. Finally, Section~\ref{sec:conclusions} concludes the paper, and further research questions are pointed out.

\section{Adversarial problem}
\label{sec:adv}

In this section, we show that the adversarial problem $\textsc{Adv}(\pmb{x})$ can be solved in polynomial time. To this end, we first derive a compact mixed-integer programming formulation of the problem, and use a decomposition argument to find a polynomial number of subproblems, which are then combined using a dynamic program.

To derive a model for 
\[ \textsc{Adv}(\pmb{x}) = \max_{\pmb{c}\in\cU}\ \textsc{Inc}(\pmb{x},\pmb{c}) \]
we first consider the incremental problem and model it as the following integer program.
\begin{subequations}
\label{eq:i}  
\begin{align}
\textsc{Inc}(\pmb{x},\pmb{c}) = \min\ & \sum_{i\in[n]} c_i y_i \label{i-0} \\
\text{s.t. } & \sum_{i\in T_j} y_i = p_j  & \forall j\in[K] \label{i-1}\\
& \sum_{i\in[n]} x_i y_i \ge P-k \label{i-2}\\
& y_i\in\{0,1\} & \forall i\in[n] \label{i-3}
\end{align}
\end{subequations}
where $P = \sum_{j \in [K]} p_j$ is the total number of elements to select. Variable $y_i$ denotes if item $i$ is contained in the recovery solution. Constraint~\eqref{i-2} ensures that we must use at least $P-k$ items from the first-stage solution $\pmb{x}$, i.e., at most $k$ items can be exchanged for other items. As $\pmb{x}$ is fixed in this context, model~(\ref{eq:i}) is an integer linear program. Note that the coefficient matrix is totally unimodular (which can be easily seen by applying, e.g., the Ghouila-Houri criterion (see \cite{GH62})). We dualize its linear programming relaxation by introducing variables $\alpha_j$ for constraints~\eqref{i-1}, variable $\beta$ for constraint~\eqref{i-2}, and variables $\gamma_i$ for constraints~\eqref{i-3} (which become $y_i \le 1$ in the relaxation). Using the dual of $\textsc{Inc}$ and by exploiting weak and strong duality, we can construct the following compact formulation for the adversarial problem.
\begin{subequations}
\label{eq:ad}
\begin{align}
\textsc{Adv}(\pmb{x}) = \max\ & \sum_{j\in[K]} p_j \alpha_j + (P-k)\beta - \sum_{i\in[n]} \gamma_i \\
\text{s.t. } & \sum_{i\in[n]} \delta_i \le \Gamma \label{ad-1} \\
& \alpha_j + x_i \beta \le \underline{c}_i + d_i\delta_i + \gamma_i & \forall j\in[K], i\in T_j \label{ad-2}\\
& \delta_i\in\{0,1\} & \forall i\in[n] \label{ad-3}\\
& \beta \ge 0 \label{ad-4}\\
& \gamma_i \ge 0 & \forall i\in[n] \label{ad-5}
\end{align}
\end{subequations}
We use $\delta_i$ as a binary variable indicating which item costs should be increased. Constraint~\eqref{ad-1} ensures that the total number of items with increased costs should be less or equal to $\Gamma$. Constraints~\eqref{ad-2} are the dual constraints to variables $y_i$ in problem $\textsc{Inc}$.

In the following we show that $\textsc{Adv}$ can be solved in polynomial time. To this end, we use an enumeration argument to decompose $\textsc{Adv}$ into simpler subproblems.

Let us first assume that we fix variable $\beta$ to some value, and decide how many items $\Gamma_j$ per part $T_j$ should have increased costs with $\sum_{j\in[K]} \Gamma_j \le \Gamma$. Then we have
\[ \textsc{Adv}(\pmb{x}) = \max_{\beta} \max_{\Gamma_1,\ldots,\Gamma_K} \sum_{j\in[K]} \textsc{Adv}_j(\pmb{x},\beta,\Gamma_j) \]
with subproblems
\begin{align*}
\textsc{Adv}_j(\pmb{x},\beta,\Gamma_j) = (P-k)\beta + \max\ &  p_j \alpha_j - \sum_{i\in T_j} \gamma_i \\
\text{s.t. } & \sum_{i\in[n]} \delta_i \le \Gamma_j \\
& \alpha_j + x_i \beta \le \underline{c}_i + d_i\delta_i + \gamma_i & \forall i\in T_j \\
& \delta_i\in\{0,1\} & \forall i\in T_j \\
& \gamma_i \ge 0 & \forall i\in T_j
\end{align*}
In particular, for fixed $\beta$ and $\Gamma_j$, we can decompose the problem and consider each part $T_j$ separately. Note that in an optimal solution to $\textsc{Adv}_j(\pmb{x},\beta,\Gamma_j)$, we can assume that
\[ \gamma_i = [\alpha_j + x_i \beta - \underline{c}_i - d_i\delta_i ]_+ \]
where we use the notation $[x]_+ = \max\{x,0\}$ for the positive part of a value. Using this observation, we can rewrite the problem as 
\begin{subequations}
\label{eq:a}  
\begin{align}
\textsc{Adv}_j(\pmb{x},\beta,\Gamma_j) = (P-k)\beta + \max\ &  p_j \alpha_j - \sum_{i\in T_j} [\alpha_j + x_i \beta - \underline{c}_i - d_i\delta_i]_+ \label{a-0} \\
\text{s.t. } & \sum_{i\in[n]} \delta_i \le \Gamma_j \label{a-1}\\
& \delta_i\in\{0,1\} & \forall i\in T_j \label{a-2}
\end{align}
\end{subequations}
For any fixed choice of $\pmb{\delta}$, the remaining problem is piecewise linear in variable $\alpha_j$. We can conclude that an optimal value for $\alpha_j$ is at one of the kink points, where the slope of the piecewise linear function changes. Hence, there is an optimal $\alpha_j \in A_j(\beta)$ with
\begin{align*}
A_j(\beta) &= A^1_j \cup A^2_j \cup A^3_j(\beta) \cup A^4_j(\beta) \text{, where}  \\
A^1_j &= \{ \underline{c}_i : i\in T_j \} \\
A^2_j &= \{  \underline{c}_i + d_i  : i \in T_j \} \\
A^3_j(\beta) &= \{\underline{c}_i - \beta : i\in T_j\} \\
A^4_j(\beta) &= \{\underline{c}_i +d_i - \beta : i \in T_j \} 
\end{align*}
For a fixed choice of $\alpha_j$, problem~(\ref{eq:a}) is equivalent to a selection problem, which can be solved in $O(|T_j|)$ time. Furthermore, we have $|A_j(\beta)|=O(|T_j|)$. Overall, we can calculate
\[ \textsc{Adv}_j(\pmb{x},\beta,\Gamma_j) = \max_{\alpha_j \in A_j(\beta)} \textsc{Adv}_j(\pmb{x},\beta,\Gamma_j,\alpha_j) \]
in $O(|T_j|^2)$, where $\textsc{Adv}_j(\pmb{x},\beta,\Gamma_j,\alpha_j)$ denotes problem~(\ref{eq:a}) with fixed choice of $\alpha_j$.

Now let us assume that for each part $j$, we guess the value of $\alpha_j$. We partition $[K]$ into the sets $V$ and $W$ accordingly, where $\alpha_j= v_j$ for $j\in V$ (in case $\alpha_j\in A^1_j\cup A^2_j$) and
$\alpha_j= v_j - \beta$ for $j\in W$ (in case $\alpha_j \in A^3_j(\beta)\cup A^4_j(\beta)$).
We find that
\begin{align*}
\textsc{Adv}(\pmb{x}) = \max\ & (P-k)\beta + \sum_{j\in V} p_j v_j + \sum_{j\in W} p_j (v_j - \beta) - \sum_{i\in[n]} \gamma_i \\
\text{s.t. } & \sum_{i\in[n]} \delta_i \le \Gamma \\
& v_j + x_i \beta \le \underline{c}_i + d_i\delta_i + \gamma_i & \forall j\in V, i\in T_j \\
& v_j - \beta + x_i \beta \le \underline{c}_i + d_i\delta_i + \gamma_i & \forall j\in W, i\in T_j \\
& \delta_i\in\{0,1\} & \forall i\in[n] \\
& \beta \ge 0 \\
& \gamma_i \ge 0 & \forall i\in[n]
\end{align*}
As before, we can assume that in an optimal solution we have
\begin{align*}
\gamma_i &= [v_j + x_i \beta - \underline{c}_i - d_i\delta_i]_+ & \forall j\in V, i \in T_j \\
\gamma_i &= [v_j -\beta + x_i \beta - \underline{c}_i - d_i\delta_i]_+ & \forall j\in W, i \in T_j 
\end{align*}
Using this property, $\textsc{Adv}(\pmb{x})$ can be rewritten as:
\begin{align*}
\textsc{Adv}(\pmb{x}) = \max\ & (P-k)\beta + \sum_{j\in V} p_j v_j + \sum_{j\in W} p_j (v_j - \beta) \\
& - \sum_{j\in V} \sum_{i \in T_j} [v_j + x_i \beta - \underline{c}_i - d_i\delta_i]_+ - \sum_{j\in W} \sum_{i \in T_j} [v_j -\beta + x_i \beta - \underline{c}_i - d_i\delta_i]_+ \\
\text{s.t. } & \sum_{i\in[n]} \delta_i \le \Gamma \\
& \delta_i\in\{0,1\} & \forall i\in[n] \\
& \beta \ge 0 
\end{align*}
Similar to the reasoning for $\alpha_j$, we find that $\textsc{Adv}(\pmb{x})$ is piecewise linear in $\beta$, and conclude that there is an optimal $\beta$ which is equal to one of the kink points
\[ B(\alpha_1,\ldots,\alpha_K) = \{ \underline{c}_i + d_i\delta_i - v_j : j\in[K], i\in T_j\} \cup\{v_j - \underline{c}_i - d_i\delta_i : j\in[K], i\in T_j \} \cup \{0\}.\]
Taking into account the possible values for $v_j$, we hence conclude that it is sufficient to consider $\beta$ from the set
\[ B = \{ \underline{c}_i + d_i b_i - \underline{c}_j - d_j b_j : i,j\in[n], b_i,b_j\in\{0,1\} \} \cup \{0\}. \]
Note that $|B|\in O(n^2)$. 

We can now show that $\textsc{Adv}(\pmb{x})$ can be solved in polynomial time. Note that directly enumerating all combinations of $\alpha_j$ and $\Gamma_j$ would require exponential time, which is why we combine our structural observations with a dynamic program to show the following result.

\begin{theorem}\label{th:adv}
The adversarial problem of RRRMSP with discrete budgeted uncertainty can be solved in strongly polynomial time.
\end{theorem}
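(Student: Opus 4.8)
The plan is to combine the two structural facts just established — that an optimal $\beta$ may be taken from the polynomial-size set $B$, and that for fixed $\beta$ each part $T_j$ is optimized independently once its local budget $\Gamma_j$ is fixed — with a dynamic program that distributes the global budget $\Gamma$ among the $K$ parts. The only genuine obstacle is that naively ranging over all budget splits $(\Gamma_1,\dots,\Gamma_K)$ with $\sum_{j\in[K]}\Gamma_j\le\Gamma$ is exponential; the $\alpha_j$ are harmless because the objective already separates over parts, so each $\alpha_j$ can be chosen within its own part.

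First I would fix $\beta\in B$. Since $|B|=O(n^2)$, this outer loop is polynomial. For a fixed $\beta$ and each part $j$ I would precompute the table of values $f_j(\Gamma_j) := \textsc{Adv}_j(\pmb{x},\beta,\Gamma_j) - (P-k)\beta$, that is, the genuine part-$j$ contribution with the shared term $(P-k)\beta$ stripped off so that it is counted only once in the end. By the preceding discussion each single value $\textsc{Adv}_j(\pmb{x},\beta,\Gamma_j)$ is obtained as the maximum over $\alpha_j\in A_j(\beta)$ of a selection problem, and $|A_j(\beta)|=O(|T_j|)$; hence the whole table $\bigl(f_j(\Gamma_j)\bigr)_{0\le\Gamma_j\le\min(|T_j|,\Gamma)}$ can be filled in polynomial time, since for each candidate $\alpha_j$ a single sorting of the per-item deviation gains yields the values at all budget levels $\Gamma_j$ simultaneously.

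The key step is to observe that, for fixed $\beta$, the objective is additively separable across parts,
\[ \max_{\substack{\Gamma_1,\dots,\Gamma_K\ge 0 \\ \sum_{j\in[K]}\Gamma_j\le\Gamma}}\ \sum_{j\in[K]} f_j(\Gamma_j), \]
which is a bounded resource-allocation (knapsack-type) problem and is solved exactly by the dynamic program
\[ g_j(t) = \max_{0\le s\le\min(|T_j|,t)} \bigl(g_{j-1}(t-s)+f_j(s)\bigr), \qquad g_0(t)=0, \]
whose value $g_K(\Gamma)$ gives the optimal total part contribution. Adding $(P-k)\beta$ back and maximizing over $\beta\in B$ then yields $\textsc{Adv}(\pmb{x})$. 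Note that the table $g$ has only $O(K\Gamma)$ entries, each computable in $O(\Gamma)$ time, so this circumvents the exponential enumeration of budget splits entirely. Importantly, the argument does not rely on any concavity of $f_j$ (which need not hold, being a pointwise maximum over $\alpha_j$), only on separability.

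Finally I would bound the running time and argue strong polynomiality. Because $K$, $\Gamma$, the $|T_j|$, and $|B|$ are all bounded by polynomials in $n$, the total number of arithmetic operations is polynomial in $n$ and independent of the magnitudes of the cost data; moreover every operation acts on sums and differences of input numbers, whose encoding length stays polynomially bounded throughout. Hence the procedure runs in strongly polynomial time, which establishes the theorem. The single point requiring care is the justification — already supplied above — that restricting $\beta$ to $B$ and $\alpha_j$ to $A_j(\beta)$ loses no optimality; granted that, the correctness of the separation and of the dynamic program is immediate.
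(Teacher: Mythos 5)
Your proposal is correct and follows essentially the same route as the paper's proof: restrict $\beta$ to the polynomial-size kink-point set $B$, compute each $\textsc{Adv}_j(\pmb{x},\beta,\Gamma_j)$ by enumerating $\alpha_j\in A_j(\beta)$ and solving a selection problem, and distribute the budget $\Gamma$ over the parts via a knapsack-type dynamic program that is structurally identical to the paper's recursion for $F_{\pmb{x},\beta}$, finishing with the outer maximization over $\beta\in B$ and the same strong-polynomiality argument. Your explicit stripping of the shared term $(P-k)\beta$ so that it is counted exactly once (the paper's written decomposition $\sum_{j\in[K]}\textsc{Adv}_j(\pmb{x},\beta,\Gamma_j)$ would nominally count it $K$ times) and the sorting trick that fills each per-part table at all budget levels simultaneously are minor refinements of the same argument, not a different approach.
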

\begin{proof}
We first enumerate all values $\beta\in B$. For each $j\in K$ and each $\Gamma_j\in\{0,\ldots,\Gamma\}$,  we calculate $\textsc{Adv}_j(\pmb{x},\beta,\Gamma_j)$ by enumerating over all possible values of $\alpha_j$. 

We then use the following dynamic program. Let 
\begin{align*}
F_{\pmb{x},\beta}(K',\Gamma') := \max \ & \sum_{j\in[K']} \textsc{Adv}_j(\pmb{x},\beta,\Gamma_j) \\
\text{s.t. } & \sum_{j\in[K']} \Gamma_j = \Gamma' \\
& \Gamma_j \in \mathbb{N}_0
\end{align*}
denote the maximum adversary value that is achievable using only the first $K'$ parts and a budget of $\Gamma'$. We have $F_{\pmb{x},\beta}(1,\Gamma') = \textsc{Adv}_1(\pmb{x},\beta,\Gamma')$. Using the recursion
\[ F_{\pmb{x},\beta}(K',\Gamma') = \max\{ F_{\pmb{x},\beta}(K'-1,\Gamma'-\Gamma_j) + \textsc{Adv}_{K'}(\pmb{x},\beta,\Gamma_j) : \Gamma_j=0,\ldots,\Gamma'\} \]
it is then possible to calculate all values of $F_{\pmb{x},\beta}$. As
\[ \textsc{Adv}(\pmb{x}) = \max_{\beta\in B} F_{\pmb{x},\beta}(K,\Gamma) \]
it is possible to solve the adversarial problem in polynomial time. More precisely, calculating all values $\textsc{Adv}_j(\pmb{x},\beta,\Gamma_j)$ for fixed $\beta$ requires $O( \sum_{j\in[K]} \Gamma|T_j|^2)$ time. The subsequent dynamic program needs to calculate $O(\Gamma K)$ many values, each of which requires $O(\Gamma)$ table lookups. So overall the runtime of this method is
\[ O(|B|(\sum_{j\in[K]} \Gamma|T_j|^2 + \Gamma^2 K)) = O(n^4\Gamma + n^2 \Gamma^2 K ) = O(n^5)\]
\end{proof}

\section{Hardness of representatives selection}
\label{sec:hardness}

While the adversarial problem of RRRMSP can be solved in polynomial time, the recoverable robust problem is hard already in the case of RRRSP, as the following result shows.

\begin{theorem}\label{th:hardness}
The decision version of the recoverable robust representatives selection problem (i.e., the case $p_j = 1$ for all $j$) with discrete budgeted uncertainty is NP-complete, even if we have $n_j = 2$ for all $j$.
\end{theorem}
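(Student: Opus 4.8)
The plan is to establish both membership in NP and NP-hardness. Membership is immediate from \Cref{th:adv}: a first-stage solution $\pmb{x}\in\X$ is a certificate of polynomial size, and since the adversarial problem is polynomially solvable, the full objective value $\sum_{i\in[n]} C_i x_i + \textsc{Adv}(\pmb{x})$ can be evaluated in polynomial time and compared against the threshold $B$ of the decision version. So a witnessing $\pmb{x}$ with $\sum_i C_i x_i + \textsc{Adv}(\pmb{x}) \le B$ can be checked efficiently, placing the problem in NP.

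For hardness the guiding observation is that when $n_j = 2$ and $p_j = 1$ the combinatorial content of a first-stage solution is minimal: each part is merely a binary toggle selecting one of its two items, so $\X$ is in bijection with $\{0,1\}^K$. Since the feasible region itself carries essentially no structure, the hardness must be encoded entirely in the numerical cost data $C_i, \underline{c}_i, \overline{c}_i$. This points to a reduction from a weakly NP-hard number problem, and the plan is to reduce from \textsc{Partition} (equivalently \textsc{Subset Sum}): given positive integers $a_1,\dots,a_m$ with $\sum_i a_i = 2T$, decide whether some subset sums to exactly $T$. I would introduce one part per integer $a_i$, letting the binary choice in part $i$ encode on which side of a bipartition $a_i$ is placed, and embed the magnitude $a_i$ into the deviation $d_i$ (and possibly the base costs) of the two items. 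The parameters $\Gamma$, $k$, the first-stage costs, and the threshold $B$ would then be calibrated so that, for a fixed bit vector $\pmb{x}$, the quantity $\textsc{Adv}(\pmb{x})$ measures the worst-case imbalance the adversary can expose, and so that $\textsc{Rec} \le B$ holds precisely when a perfectly balanced partition exists.

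A crucial design point is that the recovery budget must satisfy $k \ge 1$: if $k = 0$ the recovery step collapses and $\textsc{Rec}$ degenerates into the single-stage robust selection problem, which is polynomially solvable, so the reduction has to genuinely exploit the interaction between the adversary's $\Gamma$ inflations and our $k$ repairs. The intended mechanism is that the adversary inflates items in our first-stage solution to force expensive recoveries, while we may swap at most $k$ parts to escape; the residual unavoidable cost should reflect exactly how far the chosen bipartition is from balanced.

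The main obstacle is the simultaneous calibration of the three nested optimizations. Concretely, for each candidate bit vector one must use the piecewise-linear, kink-point analysis of \Cref{sec:adv} to obtain a closed form for $\textsc{Adv}(\pmb{x})$, and then show that minimizing first-stage cost plus this expression over all bit vectors reproduces exactly the balance condition of \textsc{Partition}. The delicate part is coupling the adversary's freedom to spread its $\Gamma$ units of budget across parts with our freedom to repair $k$ parts, so that neither side can cheat, and ruling out unintended first-stage solutions that might slip below $B$ in a NO-instance. I expect the real work to lie in proving the two implications rigorously: the YES direction by exhibiting a first-stage solution together with an explicit optimal recovery against every adversary response, and the NO direction by constructing, for every first-stage solution, an adversary strategy that forces the objective strictly above $B$.
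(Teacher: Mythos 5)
Your NP-membership argument is exactly the paper's (a witnessing $\pmb{x}$ plus polynomial evaluation of $\textsc{Adv}(\pmb{x})$ via \cref{th:adv}), and you correctly identify \textsc{Partition} as the reduction source with one toggle part per integer encoding the side of the bipartition. But the hardness half of your proposal is a plan rather than a proof, and the single idea that makes the paper's reduction actually work is absent from it. The paper does not encode the instance through the $n$ toggle parts alone: it appends $n+1$ auxiliary parts $Y$, each containing one item with infinite first-stage cost but zero second-stage cost, and one item with zero first-stage cost, huge base cost $M$, and deviation $2Q$ (where $Q = \frac{1}{2}\sum A$), and it sets $\Gamma = n+1$, $k = n$. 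Because $M$ dominates all other costs, every optimal recovery spends all $k = n$ swaps in $Y$, leaving exactly one expensive $Y$-item unavoidable; and because $\Gamma = k+1$, the adversary collapses to exactly two bulk strategies: attack all $n+1$ items in $Y$, yielding total cost $M + 2Q + \sum A_1$, or attack all deviation items in the encoding parts, yielding $M + 4Q - \sum A_1$, where $A_1$ is the subset selected in the first stage. The maximum of these two linear functions of $\sum A_1$ is minimized precisely at $\sum A_1 = Q$, with value $M + 3Q$, which is the entire content of both implications. This ``recovery sink'' gadget together with the tight relation $\Gamma = k + 1$ is the calibration you defer as ``the main obstacle''; deferring it defers the theorem.

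Moreover, the mechanism you do sketch---the adversary inflates chosen items to force expensive recoveries within the encoding parts themselves, with the residual cost measuring imbalance---is unlikely to be repairable into a proof. With one part per integer, $d_i$ proportional to $a_i$, and no auxiliary structure, the adversary's best response is simply to attack the chosen items with the largest effective deviations, and the resulting min-max value is an order-statistics quantity computable greedily; such values do not encode a subset-sum condition. The paper itself gives evidence that this direct route is blocked: \cref{thm:easy_case_RRRS} shows the case $\Gamma = k = 1$ is polynomially solvable, and the conclusions note that the hardness construction requires \emph{non-constant} $\Gamma$ and $k$, with constant budgets conjectured polynomial. So the difficulty is not merely ``calibration'' of thresholds but the structural device that restricts the adversary to two strategies whose costs cross exactly at a balanced partition---a concrete missing idea, not a routine verification.
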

 
\begin{proof}
The membership to NP follows from the fact that the adversary problem is in P (\cref{th:adv}), so it remains to show NP-hardness.

To show this, we reduce from the well-known NP-complete problem \textsc{Partition}. An instance of \textsc{Partition} consists of a multi-set $A = \{a_1, \ldots, a_{n}\}$. It is a yes-instance if there exists $A_0 \subseteq A$ such that $\sum A_0 = 1/2 \sum A$, where we denote by $\sum A$ the sum of elements in $A$. Given an instance $A$ of \textsc{Partition}, define $Q := 1/2 \sum A$ and let $M > \max_i 2a_i$ be some big integer. 
Consider the following instance $I$ of RRRSP, which is depicted in \cref{tab:hardness-reduction}.

\begin{table}[htb]
\begin{center}
\begin{tabular}{c|cc|c|cc|cc|c|cc}
 \multicolumn{1}{c}{} & \multicolumn{5}{c}{$\alpha$}& \multicolumn{5}{c}{$\beta$}\\[-1ex]
 \multicolumn{1}{c}{} & \multicolumn{5}{c}{\downbracefill} & \multicolumn{5}{c}{\downbracefill}\\[2ex]
 & \multicolumn{2}{c|}{$T_1$} &  & \multicolumn{2}{c|}{$T_{n}$} & \multicolumn{2}{c|}{$T_{n + 1}$} &  & \multicolumn{2}{c}{$T_{2n + 1}$} \\
 & 1 & 2 & $\cdots$ & 1 & 2 & 1 & 2 & $\cdots$ & 1 & 2\\
 \hline
$C_i$ & $a_1$ & 0 & $\cdots$ & $a_{n}$& 0 & $\infty$ & 0 & $\cdots$ & $\infty$ & 0 \\
$\underline{c}_i$ & 0 & 0 & $\cdots$ & 0 & 0 & 0 & $M$ & $\cdots$ & 0 & $M$ \\
$\overline{c}_i$ & 0 & $2a_1$ & $\cdots$ & 0 & $2a_{n}$ & 0 & $M + 2Q$ & $\cdots$ & 0 & $M+2Q$
\end{tabular}
\end{center}
\caption{Instance used in the hardness reduction for RRRSP with $n_j = 2$ and $p_j = 1$.\label{tab:hardness-reduction}}
\end{table}

The value $\infty$ is used to denote a constant that prevents any optimal solution from packing such an item; as the following analysis shows, $\infty > M+4Q$ is sufficient.
For the sake of readability, items are not numbered consecutively. Instead, we write $(j,i)$ to refer to item number $i$ in part $T_j$.
There are $2n + 1$ parts $T_1, \ldots, T_{2n+1}$, where $T_j = \{(j,1), (j,2)\}$ contains exactly the two items $(j,1)$ and $(j,2)$.  
Note that for $j \in \{1, \ldots, n\}$, the costs depend on $j$, and for $j \in \{n+1, \dots, 2n+1\}$, the costs are identical.

Let $\alpha := \{T_1, \ldots, T_{n}\}$ be the first $n$ parts and $\beta := \{T_{n+1}, \ldots, T_{2n+1}\}$ be the remaining parts. Finally, let $\Gamma = n+1$ and $k = n$. This completes our description of the instance $I$. We now claim that $\textsc{Rec}(I) \leq M + 3Q$ if and only if $A$ is a yes-instance of \textsc{Partition}.

To see the `if' part, assume there is $A_0 \subseteq A$ with $\sum A_0 = Q$. Then consider the binary vector $\pmb x$ resulting from choosing item $(j,2)$ in the parts $T_j \in \alpha$ and choosing item $(j,1)$ if $a_j \in A_0$, and otherwise item $(j, 2)$ in the parts $T_j \in \beta$. Now consider the adversarial stage for this vector, i.e., $\textsc{Adv}(\pmb x)$: Independent of which $n+1$ items the adversarial player attacks, in the recovery stage all $n$ recoveries will take place in $\beta$, due to the choice of $M$. Furthermore, items from $\beta$ which were attacked in the adversarial stage are prioritized in the recovery. Hence, if the adversary does not attack all $n+1$ items $(j,2)$ for $T_j \in \beta$, then any attack in $\beta$ will prove useless in the end. Therefore, the adversary only has two valid strategies:  (i) Attack all items $(j,2)$ for $T_j \in \beta$. This leads to a result of $M + 2Q + Q$ after the recovery stage. (ii) Attack all items $(j,2)$ for $T_j \in \alpha$ and waste the remaining attack. This leads to a result of $M + Q + 2Q$ after the recovery stage. We conclude $\textsc{Rec}(I) \leq M + 3Q$.

To see the `only if' part, assume that for all $A_0 \subseteq A$ we have $\sum A_0 \neq Q$. Let $\pmb x$ be some binary vector picked by the first-stage player. We show that $\textsc{Adv}(\pmb x) > M + 3Q$. If $x_{j1} = 1$ for some $T_j \in \beta$, we are immediately done, so assume $x_{j2} = 1$ for all $T_j \in \beta$. Let $A_1 := \{a_j \mid j \in [n], x_{j1} = 1\}$. There are two cases: If $\sum A_1 > Q$, the adversary can apply strategy (i), which  leads to an end result of strictly more than $M + 3Q$. If $\sum A_1 < Q$, the adversary can apply strategy (ii). After this, the selected items in $\alpha$ have total cost greater than $3Q$. Therefore, the end result is strictly more than $M + 3Q$.

\end{proof}

Note that we can also interpret the representatives selection problem with $p_j=1$ as a graph-based problem. Let $G=(V,E)$ with $V=\{0,1,\ldots,K\}$ and there are $n_j$ many parallel edges from node $j-1$ to $j$ for each $j\in[K]$, representing the choice of item from set $T_j$. In this graph, the RRRSP can be equivalently seen as a recoverable robust shortest path or minimum spanning tree problem. Theorem~\ref{th:hardness} now implies the following.

\begin{cor}
The recoverable robust shortest path and minimum spanning tree problems with discrete budgeted uncertainty are NP-hard, even on series-parallel graphs.
\end{cor}

To the best of our knowledge, previous complexity results on these problems required general graphs (see \cite{busing2012paths,nasrabadi2013robust}).

Recall that the selection problem is a special case of the representatives multi-selection problem with $K=1$, i.e., there is only one part from which items need to be chosen. The complexity of the recoverable robust selection problem with discrete budgeted uncertainty has remained open to this date \cite{chassein2018recoverable}. We show that this problem is NP-hard.

\begin{theorem}
The decision version of the recoverable robust selection problem (i.e., the case $K=1$) with discrete budgeted uncertainty is NP-complete, even if $k=1$.
\end{theorem}
\begin{proof}
As the problem can be formulated as a mixed-integer program (see \cite{chassein2018recoverable}), we know that it is contained in the class NP. To show NP-hardness, we use a reduction from \textsc{Partition} again. Let $A=\{a_1,\ldots,a_n\}$ be given values. The task is to identify a set $A_0\subseteq A$ such that $\sum A_0 = 1/2 \sum A =: Q$, where we use the notation as in the proof of Theorem~\ref{th:hardness}. We further require that $|A_0| = n/2$ (note that the problem remains NP-hard in this setting, see \cite{garey1979computers}) and we assume that $n\ge 6$.

Let $M$ be a big value. We construct the an instance of the recoverable robust selection problem as depicted in Table~\ref{tab:red2}, where we use $n'=2n+2$, $p=n/2+1$, $\Gamma=n$, and $k=1$.
There are four types of items. Items of type $\alpha$ have $C_i=\infty$, $\underline{c}_i=-(\frac{n}{2}-2)Q$, and $\overline{c}_i=\infty$ (note that negative costs can be removed from the problem by adding a suitable constant, as every feasible solution must choose the same number of items). There are $n$ such items. There is one item of type $\beta$ with $C_i=-M$, $\underline{c}_i=\overline{c}_i=\infty$. There is also one item of type $\gamma$ with $C_i=\infty$ and $\underline{c}_i=\overline{c}_i=0$. Finally, there are $n$ items of type $\epsilon$, one for each given value $a_i$, with $C_i = a_i$, $\underline{c}_i=0$, and $\overline{c}_i=Q-2a_i$.

\begin{table}[htb]
\begin{center}
\begin{tabular}{c|ccc|c|c|ccc}
\multicolumn{1}{c}{} & \multicolumn{3}{c}{$\alpha$} & \multicolumn{1}{c}{$\beta$} & \multicolumn{1}{c}{$\gamma$} & \multicolumn{3}{c}{$\epsilon$} \\[-1ex]
\multicolumn{1}{c}{} & \multicolumn{3}{c}{\downbracefill} & \multicolumn{1}{c}{\downbracefill} & \multicolumn{1}{c}{\downbracefill} & \multicolumn{3}{c}{\downbracefill}\\[2ex]
$i$ & 1 & $\dots$ & $n$ & $n+1$ & $n+2$ & $n+3$ & $\dots$ & $2n+2$ \\
\hline
$C_i$ & $\infty$ & $\dots$ & $\infty$ & $-M$ & $\infty$ & $a_1$ & $\dots$ & $a_n$ \\
$\underline{c}_i$ & $-(\frac{n}{2}-2)Q$ & $\dots$ & $-(\frac{n}{2}-2)Q$ & $\infty$ & 0 & 0 & $\dots$ & 0 \\
$\overline{c}_i$ & $\infty$ & $\dots$ & $\infty$ & $\infty$ & 0 & $Q-2a_1$ & $\dots$ & $Q-2a_n$ 
\end{tabular}
\caption{Instance used in the hardness reduction for recoverable robust selection.}\label{tab:red2}
\end{center}
\end{table}

The constant $\infty$ denotes a value that is sufficiently large to prevent any solution from packing an item with such costs (it suffices to set $\infty > nQ$).
We can assume that none of the items of type $\alpha$ or $\gamma$ are used in the first-stage solution. Furthermore, we assume that $M$ is sufficiently large, such that the item of type $\beta$ must be part of an optimal solution (it suffices to set $M > Q$). The remaining $n/2$ first-stage items are then chosen amongst items of type $\epsilon$. For the adversary, there are only two strategies to allocate the uncertainty budget $\Gamma$: In the first case, all items in $\alpha$ are made expensive. Then the first-stage solution will swap item $\beta$ for item $\gamma$. In the second case, all items in $\epsilon$ are made expensive. Then, item $\beta$ will be swapped for one of the items in $\alpha$. Note that any mixed strategy can be disregarded, as they will still result in item $\beta$ being swapped for one of the items of type $\alpha$ that remain cheap.

Let $X$ be the first-stage costs of a choice of items amongst $\epsilon$. We find that the total costs of such a solution are
\[ -M + X + \max\{ 0, \frac{n}{2}Q-2X-(\frac{n}{2}-2)Q\} = -M + \max\{ X, 2Q-X\} \]
Hence, there exists a solution to the recoverable robust selection problem with costs less or equal to $Q-M$ if and only if $X=Q$, which means that the \textsc{Partition} instance is a yes-instance.

\end{proof}

As a final hardness result, we remark that also the two-stage version of the selection problem with discrete budgeted uncertainty as considered in \cite{chassein2018recoverable} is NP-hard as well. The corresponding proof is given in the appendix.

\begin{theorem}\label{th:hardness3}
The decision version of the two-stage robust selection problem with discrete budgeted uncertainty is NP-complete, even if $k=1$.
\end{theorem}

\section{Polynomially solvable cases}

\label{sec:special}

We now consider the special case of representatives selection with $n_j=2$ and $p_j=1$, i.e., each part consists of two elements, and we need to pick one of them. Furthermore, we consider $k=\Gamma=1$. In the following, we show that this case can be solved in polynomial time. 

As $\Gamma = 1$, it is tempting to make a case distinction upon which item the adversary attacks. However, remember that the first-stage player first chooses one of exponentially many $\pmb x$, and then the adversary can react to this choice in a way not controllable by the first-stage player. Therefore, if we assume that a certain item is attacked, it is invalid to iterate over every single first-stage solution $\pmb x$ (because $\min_x \max_a f(x,a) \neq \max_a \min_x f(x,a)$ in general). Instead, as mentioned in \cref{sec:intro}, we do the following: For each possible attack $a$ of the adversarial player, we characterize the set $\X_a$ of first-stage solutions $\pmb x$ with the property that $a$ is an optimal response to $\pmb x$ (details below). We then show how the first-stage player can find the optimum of $\X_a$ in polynomial time. Surprisingly, the argument becomes more technical than one might expect at first.

For this section, we make use of the following notation. Let $\langle j,\pmb{x}\rangle\in[n]$ denote the index of the item chosen in part $j\in [K]$ by a fixed first-stage solution $\pmb{x}$, and let $\langle j,\overline{\pmb{x}}\rangle$ denote the index of the item that is not chosen. Using this notation, the incremental problem can be written in the following way.
\begin{subequations}
\label{eq:incs}  
\begin{align}
\textsc{Inc}(\pmb{x},\pmb{c}) = \min\ & \sum_{j\in[K]} c_{\langle j,\pmb{x}\rangle} + \sum_{j\in[K]} (c_{\langle j,\overline{\pmb{x}}\rangle}  - c_{\langle j, \pmb{x} \rangle}) y_j \label{incs-1} \\
\text{s.t. } & \sum_{j\in[K]} y_j \le 1 \label{incs-2}\\
& y_j \in \mathbb{N}_0 & \forall j\in[K] \label{incs-3}
\end{align}
\end{subequations}
We use a variable $y_j$ for every part $T_j$  ($j\in[K]$) to denote whether the item exchange takes place in this part. The objective~\eqref{incs-1} consists of two sums. The first sum denotes the costs if no element is changed. The second sum represents the effect of exchanging one item. There is only a single constraint~\eqref{incs-2}, enforcing that only a single item change can take place.

Relaxing and dualizing problem~(\ref{eq:incs}) gives the following adversarial problem:
\begin{subequations}
\label{eq:advs}  
\begin{align}
\textsc{Adv}(\pmb{x}) = \max\ & \sum_{j\in[K]} c_{\langle j, \pmb{x} \rangle} - \pi \label{advs-1} \\
\text{s.t. } & c_{\langle j, \pmb{x} \rangle} - c_{\langle j, \overline{\pmb{x}}\rangle} \le \pi & \forall j\in[K] \label{advs-2}\\
& \pmb{c}\in\cU \label{advs-3}\\
& \pi \ge 0 \label{advs-4}
\end{align}
\end{subequations}
Note that in an optimal solution to this problem, we can assume that $\pi$ is equal to the largest left-hand-side of constraints~(\ref{advs-2}) or equal to zero, if they are all negative. Hence, we find that
\begin{equation}
\textsc{Adv}(\pmb{x}) =  \max_{\pmb{c}\in\cU} \sum_{j\in[K]} c_{\langle j, \pmb{x} \rangle} - \max_{j\in[K]} [c_{\langle j, \pmb{x} \rangle} - c_{\langle j, \overline{\pmb{x}} \rangle}]_+ 
\label{eq:adv_problem_pj_1_short}
\end{equation}

A small technical difficulty is that there may exist multiple strategies for the adversary to solve $\textsc{Adv}(\pmb{x})$, which are all optimal. The following lemma guarantees that among two special strategies at least one is always optimal. Described informally, strategy I is the strategy to increase the price of an item currently not selected by the first-stage player, in order to decrease the value of a potential recovery at this item. Likewise, strategy II is the strategy to increase the price of an item currently selected by the first-stage player, forcing the first-stage player to either pay the increased price, or to recover this item (instead of recovering another item, which the first-stage player had preferred to recover if there had been no attack).

\begin{lem}
\label{lemma:strategy}
Given an instance of RRRSP with $n_j = 2$ and $p_j = 1$ for all $j$, let $\pmb x$ be a fixed first-stage solution. If $i^\star, j^\star$ have the property that
\begin{align}
j^\star &\in \arg\max \{ \underline{c}_{\langle j, \pmb{x}\rangle} - \underline{c}_{\langle j, \overline{\pmb{x}} \rangle} : j\in[K] \} \label{eq:jstar}\\
\text{and }\quad i^\star &\in \arg\max \{ d_{\langle j,\pmb{x}\rangle} - [\overline{c}_{\langle j, \pmb{x} \rangle}-\underline{c}_{\langle j,\overline{\pmb{x}}\rangle} - [\underline{c}_{\langle j^\star,\pmb{x}\rangle} - \underline{c}_{\langle j^\star, \overline{\pmb{x}}\rangle}]_+]_+  : j\in[K] \}
\label{eq:istar}
\end{align}
then there is an optimal solution $\pmb{c}^\star$ to $\textsc{Adv}(\pmb{x})$ as defined in equation~\eqref{eq:adv_problem_pj_1_short} where one of the following two cases holds:
\begin{enumerate}
\item (Strategy I) $c^\star_i = \begin{cases}
\overline{c}_i & \text{ if } i = \langle j^\star,\overline{\pmb{x}}\rangle \\
\underline{c}_i & \text{ otherwise } 
\end{cases} $ for all $i\in [n]$, or

\item (Strategy II) $c^\star_i = \begin{cases}
\overline{c}_i & \text{ if } i = \langle i^\star, \pmb{x} \rangle \\
\underline{c}_i & \text{ otherwise } 
\end{cases} $ for all $i\in[n]$.
\end{enumerate}
Furthermore, for any
\begin{equation} 
b^\star \in \arg\max \{ \underline{c}_{\langle j,\pmb{x}\rangle} - \underline{c}_{\langle j, \overline{\pmb{x}}\rangle} : j\in[K], j \neq j^\star\},
\label{eq:bstar}
\end{equation}
we have that
\begin{equation}
g_1 :=   [\underline{c}_{\langle j^\star,\pmb{x}\rangle} - \underline{c}_{\langle j^\star,\overline{\pmb{x}}\rangle}]_+ - \max\{[\underline{c}_{\langle j^\star,\pmb{x}\rangle} - \overline{c}_{\langle j^\star,\overline{\pmb{x}}\rangle}]_+,\ [\underline{c}_{\langle b^\star,\pmb{x}\rangle} - \underline{c}_{\langle b^\star,\overline{\pmb{x}}\rangle}]_+ \} 
\label{eq:g1}
\end{equation}
is the gain for the adversarial player, if the adversarial player employs strategy I instead of doing nothing at all. Similarily,
\begin{equation}
 g_2 := d_{\langle i^\star,\pmb{x}\rangle} - [\overline{c}_{\langle i^\star,\pmb{x}\rangle}-\underline{c}_{\langle i^\star,\overline{\pmb{x}}\rangle} - [\underline{c}_{\langle j^\star,\pmb{x}\rangle} - \underline{c}_{\langle j^\star,\overline{\pmb{x}}\rangle}]_+]_+
\label{eq:g2} 
\end{equation}
is the gain for the adversarial player, if the adversarial player employs strategy II instead of doing nothing at all.
\end{lem}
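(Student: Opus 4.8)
The plan is to exploit the fact that, because $\Gamma = 1$, every admissible cost vector $\pmb c \in \cU$ differs from the nominal vector $\underline{\pmb c}$ in at most one coordinate. Write $r_j := \underline{c}_{\langle j, \pmb x\rangle} - \underline{c}_{\langle j, \overline{\pmb x}\rangle}$ for the nominal recovery gain in part $j$ and $R^\star := \max_{j}[r_j]_+$ for the best nominal recovery, so that by \eqref{eq:adv_problem_pj_1_short} the ``do-nothing'' objective equals $\sum_j \underline{c}_{\langle j, \pmb x\rangle} - R^\star$. I would first observe that any single-coordinate cost increase weakly raises this objective (attacking an unselected item leaves the sum fixed and can only lower the recovery; attacking a selected item raises the sum by some $d$ but raises the recovery by at most $d$), so the adversary attacks exactly one item, and I would split into two exhaustive families: attacks on a currently unselected item and attacks on a currently selected item.

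For the unselected case, attacking $\langle j, \overline{\pmb x}\rangle$ leaves the sum $\sum_j \underline{c}_{\langle j, \pmb x\rangle}$ unchanged and only lowers the recovery gain in part $j$ from $[r_j]_+$ to $[r_j - d_{\langle j,\overline{\pmb x}\rangle}]_+ = [\underline{c}_{\langle j, \pmb x\rangle} - \overline{c}_{\langle j, \overline{\pmb x}\rangle}]_+$. Hence the overall recovery $\max_\ell[r_\ell]_+$ drops only when the attacked part was the top maximizer, which shows that the best such attack is at a part $j^\star$ maximizing $r_j$ (equivalently \eqref{eq:jstar}); the post-attack recovery then equals $\max\{[\underline c_{\langle j^\star, \pmb x\rangle} - \overline c_{\langle j^\star, \overline{\pmb x}\rangle}]_+,\, [r_{b^\star}]_+\}$ with $b^\star$ the runner-up from \eqref{eq:bstar}, yielding exactly the gain $g_1$ of \eqref{eq:g1}.

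The harder part is the selected case. Attacking $\langle i_0, \pmb x\rangle$ raises the sum by $d_{\langle i_0, \pmb x\rangle}$ and raises the recovery gain in part $i_0$ to $s_{i_0} := \overline{c}_{\langle i_0, \pmb x\rangle} - \underline{c}_{\langle i_0, \overline{\pmb x}\rangle} = r_{i_0} + d_{\langle i_0, \pmb x\rangle}$, so the adversary's gain over doing nothing is
\[ g_{II}(i_0) = d_{\langle i_0, \pmb x\rangle} - \max\bigl\{[s_{i_0}]_+,\ \max_{\ell \neq i_0}[r_\ell]_+\bigr\} + R^\star. \]
I would then show, by a short case analysis on whether $R^\star$ is attained inside or outside part $i_0$ and on the signs of $r_{i_0}$ and $s_{i_0}$, that this collapses to the single-bracket form
\[ g_{II}(i_0) = d_{\langle i_0, \pmb x\rangle} - [\,s_{i_0} - R^\star\,]_+ . \]
Maximizing the right-hand side over $i_0$ is precisely the optimization defining $i^\star$ in \eqref{eq:istar}, and its optimal value is $g_2$ of \eqref{eq:g2}. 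This equivalence of the two expressions is the main obstacle: it is where the nontrivial bookkeeping lives, since one must simultaneously track which part realizes the post-attack recovery and the possibility that the recovery player abandons the attacked part in favor of another.

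Finally, since every admissible single-coordinate attack falls into one of the two families and we have identified the optimum within each (both gains being nonnegative, so attacking is never worse than doing nothing), the global optimum for the adversary is the larger of $g_1$ and $g_2$, attained by Strategy~I or Strategy~II respectively. This simultaneously produces an optimal $\pmb c^\star$ of one of the two stated forms and verifies the gain formulas \eqref{eq:g1} and \eqref{eq:g2}.
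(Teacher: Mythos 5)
Your proposal is correct and takes essentially the same route as the paper's proof: since $\Gamma = 1$, it splits the single-item attacks into the two exhaustive families (raising the cost of an unselected item to lower the inner maximum in \eqref{eq:adv_problem_pj_1_short}, versus raising the cost of a selected item to increase the sum), identifies the optimal part within each family, and obtains $g_1$ and $g_2$ by differencing \eqref{eq:adv_problem_pj_1_short} before and after the attack. Your explicit reduction of the selected-item gain to the single-bracket form $d_{\langle i_0,\pmb{x}\rangle} - [\,s_{i_0} - R^\star\,]_+$ (which I verified holds) merely spells out the bookkeeping the paper compresses into ``take the difference of \eqref{eq:adv_problem_pj_1_short} for old and new values of $\pmb{c}$.''
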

\begin{proof}
Recall that $\Gamma=1$, hence, the adversery can select only a single part $j$, where he increases the cost of one of the two items it contains. Note that there are only two possible strategies to optimize the adversarial value in \cref{eq:adv_problem_pj_1_short}:
\begin{enumerate}
\item (Strategy I) Reduce the value of the inner maximum. In this case, choose
$j^\star$ such that $j^\star$ satisfies \cref{eq:jstar}
and increase the costs of item $\langle j^\star,\overline{\pmb{x}}\rangle$. If the argmax in \cref{eq:jstar} has multiple elements, every choice is equally good. To see that \cref{eq:g1} is correct, take the difference of \cref{eq:adv_problem_pj_1_short} for the corresponding old and new values of $\pmb{c}$.

\item (Strategy II) Increase the value of the sum. Here one needs to consider that this may lead to an increase in the inner maximum, which should be avoided (i.e., we increase the cost of an item that will be dropped in the recovery step anyway). The best choice here is taking $i^\star$ such that $i^\star$ satisfies \cref{eq:istar} and increasing the costs of item $\langle i^\star,\pmb{x}\rangle$, which can again be seen as taking the difference of \cref{eq:adv_problem_pj_1_short} for old and new values of $\pmb{c}$. Again, if the argmax in \cref{eq:istar} has multiple elements, every choice is equally good. Substituting $i^\star$ into \cref{eq:istar} yields \cref{eq:g2}.

\end{enumerate}
\end{proof}

Note that the adversarial player will employ strategy I only if $g_1 \geq g_2$ and will employ strategy II only if $g_2\geq g_1$. Also note, that the numerical values of $g_1$, $g_2$ are independent of the concrete choices of $j^\star, i^\star, b^\star$ satisfying \cref{eq:jstar,eq:istar,eq:bstar}. We can therefore view $g_1, g_2$ as only dependent on $\pmb x$, which we make use of in the next proof. We can now prove the main result of this section.

\begin{theorem}\label{thm:easy_case_RRRS}
The recoverable robust representatives selection problem with discrete budgeted uncertainty and $\Gamma = k = 1$ and $p_j=1$, $n_j = 2$ for all $j\in[K]$ can be solved in strongly polynomial time $O(n^3)$.
\end{theorem}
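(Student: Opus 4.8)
The plan is to combine the scenario--reformulation highlighted in the introduction with the fact that $\Gamma=1$. Since the adversary raises the cost of at most one item, the uncertainty set contains only $O(n)$ relevant scenarios: for each item $i$ the scenario $\pmb c^{i}$ that replaces $\underline c_i$ by $\overline c_i$, plus the nominal scenario (which is in fact dominated, since attacking an unselected item can only hurt the recovering player). Thus $\textsc{Adv}(\pmb x)=\max_{i}\textsc{Inc}(\pmb x,\pmb c^{i})$ is a maximum over $O(n)$ inner problems, and $\textsc{Rec}=\min_{\pmb x}\big(\sum_i C_ix_i+\textsc{Adv}(\pmb x)\big)$ has exactly the min--max shape to which the reformulation applies. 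Applying it with this polynomial scenario set yields
\[ \textsc{Rec}=\min_{s}\ \min_{\pmb x\in\X^{s}}\Big(\sum_{i\in[n]}C_ix_i+\textsc{Inc}(\pmb x,\pmb c^{s})\Big), \]
where $\X^{s}$ is the set of first-stage solutions for which scenario $s$ is a worst case.

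I would then fix a guessed worst-case scenario $s$ (attacking some item $i^{\ast}$ in part $T_a$) and solve the restricted problem by a dynamic program over the parts. Because $k=1$, every $\pmb y\in R(\pmb x)$ changes the chosen item in at most one part, so for a fixed scenario the value $\textsc{Inc}(\pmb x,\pmb c^{s})$ is the minimum over ``swap one part / swap nothing''. The crucial structural point is that, once $s$ is fixed, the objective $\sum_i C_ix_i+\textsc{Inc}(\pmb x,\pmb c^{s})$ \emph{separates} into per-part contributions: each part contributes the cost of its kept item (or, if it is the swapped part, of the item swapped in), with the single deviation $d_{i^{\ast}}$ added precisely when $\pmb x$ selects the attacked item. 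The DP sweeps the parts, chooses each part's item, decides in (at most) one part to install the recovery swap, and accumulates cost; since it minimizes, it automatically realizes the optimal recovery and hence the true value of $\textsc{Inc}(\pmb x,\pmb c^{s})$.

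The main obstacle is enforcing membership in $\X^{s}$, i.e.\ certifying that no competing scenario beats $s$. Each comparison $\textsc{Inc}(\pmb x,\pmb c^{s})\ge\textsc{Inc}(\pmb x,\pmb c^{s'})$ has a closed form that depends on the \emph{global} best-available single swap $\max_j(\underline c^{\mathrm{sel}}_j-\underline c^{\mathrm{oth}}_j)$, and — the delicate part — the per-part swap values themselves depend on the decision $\pmb x$, so this term cannot simply be pre-sorted away. My plan is to absorb it into the state of the dynamic program: since each part offers only two orientations, the best competing swap ranges over $O(n)$ candidate values, so the DP can carry ``best competing swap realized so far'' as part of its state and check the scenario-comparison inequalities against it as the parts are processed. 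The careful bookkeeping to get right is exactly this consistency: one must verify that every enumerated configuration yields a valid upper bound (so that no spurious solution with too-small objective slips through) and that the configuration attained by the true optimum is reachable by the DP.

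Putting the pieces together: for each of the $O(n)$ worst-case scenarios I run the dynamic program, which has $O(n)$ stages (the parts), an $O(n)$-sized state (the running best competing swap, together with a Boolean recording whether the recovery swap has been used), and $O(1)$ work per transition, hence $O(n^{2})$ time per scenario. The minimum over all scenarios equals $\textsc{Rec}$ by exactness of the reformulation and exhaustiveness of the enumeration. The total running time is $O(n)\cdot O(n^{2})=O(n^{3})$, and it is strongly polynomial because only additions and comparisons of the input numbers are performed.
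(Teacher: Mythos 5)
Your overall blueprint is the same as the paper's: since $\Gamma=1$ there are only $O(n)$ candidate attacks, and for each guessed attack $s$ one restricts to the set $\X^s$ of first-stage solutions for which $s$ is a worst case and optimizes over the parts by a linear sweep. The reduction of $\textsc{Inc}(\pmb{x},\pmb{c}^s)$ to a ``swap at most one part'' decision is also fine. The genuine gap is in the one step you yourself flag as delicate: certifying membership in $\X^s$. Each comparison $\textsc{Inc}(\pmb{x},\pmb{c}^s)\ge\textsc{Inc}(\pmb{x},\pmb{c}^{s'})$ couples all parts not through one global quantity but through \emph{two}: the best and the second-best nominal swap gain. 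Indeed, for a scenario $s'$ attacking the \emph{unselected} item of part $j$, the inner maximum in \eqref{eq:adv_problem_pj_1_short} becomes $\max\{[\underline{c}_{\langle j,\pmb{x}\rangle}-\overline{c}_{\langle j,\overline{\pmb{x}}\rangle}]_+,\,[\max_{j'\neq j}(\underline{c}_{\langle j',\pmb{x}\rangle}-\underline{c}_{\langle j',\overline{\pmb{x}}\rangle})]_+\}$; such an attack only matters when $j$ is the argmax part, and then the second maximum enters --- this is exactly why \cref{lemma:strategy} needs $b^\star$ in \cref{eq:bstar,eq:g1}. A single DP state ``best competing swap realized so far'' is therefore informationally insufficient.

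Moreover, even with the right quantities in the state, checking the scenario-comparison inequalities ``as the parts are processed'' is unsound: the constraints must hold with respect to the \emph{final} maxima, which are unknown mid-sweep. The left-hand sides of the relevant inequalities (cf.\ \cref{eq:statement_2}) are nondecreasing in the maximal swap gain, so a choice accepted against the running maximum $m_t\le M$ may violate the constraint against the final $M$; then some $\pmb{x}\notin\X^s$ is evaluated under the non-worst scenario $s$, and the algorithm can return a value strictly below $\textsc{Rec}$ --- precisely the ``spurious solution with too-small objective'' you mention but do not prevent. Enlarging the state to the running top-two pair costs $O(n^2)$ states per stage and pushes the total beyond $O(n^3)$, and deferring the checks to the end fails because they involve part-specific data. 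The paper's resolution is to \emph{guess up front} the identities and item orientations of $j^\star$ together with $b^\star$ (strategy I) or $i^\star$ (strategy II) --- $O(n^2)$ guesses in total; \cref{lemma:strategy} then makes the gains $g_1,g_2$ constants of the guess, every certification condition in \cref{eq:statement_1,eq:statement_2,eq:statement_3,eq:statement_4,eq:statement_5} becomes a local per-part threshold test, and a greedy $O(n)$ sweep with no max-tracking state suffices. Your proof needs this guessing step (or an equivalent sound mechanism) to close the gap; as written, the DP is not a valid proof of the theorem.
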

\begin{proof}
Consider a fixed instance of the RRRSP with $\Gamma = k = 1$ and $n_j = 2$ for all $j$. For any $j^\star, i^\star, b^\star \in [K]$, with $j^\star \neq b^\star$, consider the following sets of first-stage solutions $\pmb x$: 
\begin{align*} \X_{j^\star b^\star} &:= \{ \pmb x\in \X : \pmb x \text{ satisfies } g_1 \geq g_2 \text{ and }\pmb x, j^\star, b^\star \text{ satisfy \cref{eq:jstar,eq:bstar}}\}\\
\mathcal{Y}_{j^\star i^\star} &:= \{ \pmb x\in \X : \pmb x \text{ satisfies } g_1 \leq g_2 \text{ and } \pmb x, j^\star, i^\star \text{ satisfy \cref{eq:jstar,eq:istar}}\}.
\end{align*}
It is easy to see that every first-stage solution is contained in at least one of the above sets. Hence, in order to prove the theorem, it suffices to show how to compute each of the values
$\min\{\textsc{Adv}(\pmb x) : \pmb x  \in \X_{j^\star b^\star}\}$ and $\min\{\textsc{Adv}(\pmb x) : \pmb x  \in \mathcal{Y}_{j^\star i^\star}\}$ in time $O(n)$. Taking the minimum of all obtained values yields the result.

\textbf{Computing the optimum of $\X_{j^\star b^\star}$}: 
For any $\pmb x \in \X_{j^\star b^\star}$ 
there are two possible items to choose from part $j^\star$ and part $b^\star$.
For each fixed choice of the four possible values of $\langle j^\star,\pmb{x}\rangle$ and $\langle b^\star,\pmb{x}\rangle$, run the following subroutine:
Observe that \cref{eq:jstar} and \cref{eq:bstar} are equivalent to the statement 
\begin{equation} \underline{c}_{\langle b^\star,\pmb{x}\rangle} - \underline{c}_{\langle b^\star,\overline{\pmb{x}}\rangle} \leq \underline{c}_{\langle j^\star,\pmb{x}\rangle} - \underline{c}_{\langle j^\star,\overline{\pmb{x}}\rangle} \ \land \ \forall j \in [K] \setminus \{j^\star, b^\star\} : \underline{c}_{\langle j,\pmb{x}\rangle} - \underline{c}_{\langle j, \overline{\pmb{x}}\rangle} \leq \underline{c}_{\langle b^\star,\pmb{x}\rangle} - \underline{c}_{\langle b^\star,\overline{\pmb{x}}\rangle} 
\label{eq:statement_1}
\end{equation}
and, under the condition that \cref{eq:statement_1} is true, we can compute $g_1$ by \cref{eq:g1}, and furthermore the statement $g_1 \geq g_2$ is equivalent to the statement 
\begin{equation}
\forall j \in [K] : d_{\langle j,\pmb{x}\rangle} - [\overline{c}_{\langle j,\pmb{x}\rangle}-\underline{c}_{\langle j, \overline{\pmb{x}}\rangle} - [\underline{c}_{\langle j^\star,\pmb{x}\rangle} - \underline{c}_{\langle j^\star,\overline{\pmb{x}}\rangle}]_+]_+ \leq g_1.
\label{eq:statement_2} 
\end{equation}
Therefore, we can iterate over $j \in [K]$ and, for each of the two possible choices of $\langle j,\pmb{x}\rangle$, check whether this choice satisfies \cref{eq:statement_1,eq:statement_2}. If this limits the possible choices to one value, we set $\langle j,\pmb{x}\rangle$ to this value. If this limits the possible choices to zero values, we can break the current loop iteration and skip to the next choice of $\langle j^\star,\pmb{x}\rangle$ and $\langle b^\star,\pmb{x}\rangle$. If both choices remain, we choose $\langle j,\pmb{x}\rangle$ such that $C_{\langle j,\pmb{x}\rangle} + \underline{c}_{\langle j,\pmb{x}\rangle}$ is minimum.

The result is a first-stage solution $\pmb{x}$ 
with total costs $g_1 + \sum_{j \in [K]} (C_{\langle j,\pmb{x}\rangle} + \underline{c}_{\langle j,\pmb{x}\rangle})$ by \cref{lemma:strategy}. 
Because $g_1$ only depends on $\langle j^\star,\pmb{x}\rangle$ and $\langle b^\star,\pmb{x}\rangle$, this value is minimum among all $\pmb x \in \X_{j^\star b^\star}$ considered in the current subroutine.

\textbf{Computing the optimum of $\mathcal{Y}_{j^\star i^\star}$:} 
Similarly to before, for each of the at most four choices of 
$\langle j^\star,\pmb{x}\rangle$ and $\langle i^\star,\pmb{x}\rangle$,
run the following subroutine:

Observe that \cref{eq:jstar} is equivalent to the statement
\begin{equation}
\forall j \in [K] \setminus \{ j^\star \} :  \underline{c}_{\langle j,\pmb{x}\rangle} - \underline{c}_{\langle j, \overline{\pmb{x}} \rangle} \leq \underline{c}_{\langle j^\star, \pmb{x}\rangle} - \underline{c}_{\langle j^\star,\overline{\pmb{x}}\rangle},
\label{eq:statement_3}
\end{equation}
and, under the condition that \cref{eq:statement_3} is true, \cref{eq:istar} is equivalent to the statement
\begin{align}
\forall j \in [K] \colon & \ d_{\langle j,\pmb{x}\rangle} - [\overline{c}_{\langle j,\pmb{x}\rangle}-\underline{c}_{\langle j, \overline{\pmb{x}}\rangle} - [\underline{c}_{\langle j^\star, \pmb{x}\rangle} - \underline{c}_{\langle j^\star, \overline{\pmb{x}}\rangle}]_+]_+ \nonumber \\ 
 & \leq d_{\langle i^\star,\pmb{x}\rangle} - [\overline{c}_{\langle i^\star,\pmb{x}\rangle}-\underline{c}_{\langle i^\star,\overline{\pmb{x}}\rangle} - [\underline{c}_{\langle j^\star,\pmb{x}\rangle} - \underline{c}_{\langle j^\star,\overline{\pmb{x}}\rangle}]_+]_+.
\label{eq:statement_4}
\end{align}
Likewise, under the condition that \cref{eq:statement_3,eq:statement_4} are true, $g_2$ can be computed by \cref{eq:g2}, and furthermore the statement $g_1 \leq g_2$ is equivalent to the statement
\begin{equation}
\forall j \in [K] \setminus{j^\star}: [\underline{c}_{\langle j^\star,\pmb{x}\rangle} - \underline{c}_{\langle j^\star,\overline{\pmb{x}}\rangle}]_+ - \max\{[\underline{c}_{\langle j^\star,\pmb{x}\rangle} - \overline{c}_{\langle j^\star,\overline{\pmb{x}}\rangle}]_+,\ [\underline{c}_{\langle j,\pmb{x}\rangle} - \underline{c}_{\langle j,\overline{\pmb{x}}\rangle}]_+ \} \leq g_2.
\label{eq:statement_5}
\end{equation}
Therefore, we can use \cref{eq:statement_3,eq:statement_4,eq:statement_5} to iterate over $j \in [K]$ analogously to the previous case.
At the end, we get a first-stage solution $\pmb{x}$ with total costs $g_2 + \sum_{j \in [K]} (C_{\langle j,\pmb{x}\rangle} + \underline{c}_{\langle j, \pmb{x}\rangle})$, by \cref{lemma:strategy}. 
Analogously to the previous case, we see that this is optimal among all first-stage solutions considered in the current subroutine.
\end{proof}

It seems likely that the proof of Theorem~\ref{thm:easy_case_RRRS} can be extended to general values $n_j$, if $p_j=\Gamma = k = 1$ holds. There still remain two basic adversarial strategies in this case; one where the chosen item of a part has its costs increased, and one where the cheapest item not chosen in a part has its cost increased. This may lead to an enumeration-based solution method along similar lines as presented in the proof. For the sake of brevity, we omit the details of this claim.

We further remark on two simple special cases. The first is for $k=0$. In this case, it is not possible to use any recovery action and \textsc{Rec} becomes
\[ \min_{\pmb{x}\in\X} \pmb{C}^t\pmb{x} + \max_{\pmb{c}\in\cU} \pmb{c}^t\pmb{x} 
=  \min_{\pmb{x}\in\X} \max_{\pmb{c}\in\cU} \left( (\pmb{C}+\pmb{c})^t \pmb{x} \right) \]
which is a standard min-max optimization problem with budgeted uncertainty. This means the results from~\cite{bertsimas2003robust} apply and \textsc{Rec} can be solved in polynomial time.

\begin{obs}
The recoverable robust representatives multi-selection problem with discrete budgeted uncertainty and $k=0$ can be solved in polynomial time.
\end{obs}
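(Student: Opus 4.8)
The plan is to observe that the case $k=0$ collapses the two-stage problem into an ordinary single-stage min-max problem, for which the classical result of \cite{bertsimas2003robust} already guarantees polynomial solvability. First I would note that by definition $R(\pmb{x}) = \{\pmb{y}\in\X : \Delta(\pmb{x},\pmb{y}) \le 2k\}$, so when $k=0$ the only admissible recovery is $\pmb{y} = \pmb{x}$ itself. Consequently $\textsc{Inc}(\pmb{x},\pmb{c}) = \pmb{c}^t\pmb{x}$ and $\textsc{Adv}(\pmb{x}) = \max_{\pmb{c}\in\cU}\pmb{c}^t\pmb{x}$, which yields the identity
\[ \textsc{Rec} = \min_{\pmb{x}\in\X}\left(\pmb{C}^t\pmb{x} + \max_{\pmb{c}\in\cU}\pmb{c}^t\pmb{x}\right) = \min_{\pmb{x}\in\X}\max_{\pmb{c}\in\cU}(\pmb{C}+\pmb{c})^t\pmb{x} \]
already stated above. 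Since the first-stage costs are deterministic, I would fold them into the nominal part of the uncertainty by setting $\underline{c}'_i := \underline{c}_i + C_i$ while leaving the deviations $d_i$ unchanged; this turns $\textsc{Rec}$ into a pure min-max selection problem over the budgeted set $\cU$.

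Next I would apply the standard Bertsimas--Sim reformulation. For a fixed $\pmb{x}$ the inner adversary simply selects the $\Gamma$ largest deviations among chosen items, so $\max_{\pmb{c}\in\cU}\sum_i(\underline{c}'_i + d_i\delta_i)x_i = \sum_i \underline{c}'_i x_i + \max\{\sum_i d_i\delta_i x_i : \sum_i\delta_i\le\Gamma,\ \delta_i\in\{0,1\}\}$. Introducing a threshold variable $\theta\ge 0$ guessing the value of the $\Gamma$-th largest active deviation, this maximum equals $\min_{\theta\ge 0}(\Gamma\theta + \sum_i [d_i - \theta]_+ x_i)$, and it suffices to test the $O(n)$ candidate thresholds $\theta\in\{0\}\cup\{d_i : i\in[n]\}$. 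Exchanging the two minimizations gives $\textsc{Rec} = \min_{\theta\ge 0}(\Gamma\theta + \min_{\pmb{x}\in\X}\sum_i(\underline{c}'_i + [d_i-\theta]_+)x_i)$.

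Finally, for each fixed $\theta$ the inner problem $\min_{\pmb{x}\in\X}\sum_i(\underline{c}'_i + [d_i-\theta]_+)x_i$ is just the nominal RMSP with deterministic costs, which decomposes over the parts $T_1,\ldots,T_K$: within each part one greedily selects the $p_j$ items of smallest modified cost, so the inner problem is solvable in $O(n\log n)$ time. Solving $O(n)$ such problems, one per candidate threshold, and taking the best value yields the claimed polynomial-time bound.

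I do not expect a genuine obstacle here; the only point requiring a little care is verifying that restricting $\theta$ to the finite candidate set loses nothing, which is exactly the content of the Bertsimas--Sim argument and may simply be cited. A secondary routine check is that the nominal RMSP is polynomial-time solvable, which follows immediately from its part-wise separability.
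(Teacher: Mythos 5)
Your proposal is correct and follows essentially the same route as the paper: observe that $k=0$ forces $\pmb{y}=\pmb{x}$, collapse the problem to the single-stage min-max selection problem $\min_{\pmb{x}\in\X}\max_{\pmb{c}\in\cU}(\pmb{C}+\pmb{c})^t\pmb{x}$, and invoke the Bertsimas--Sim result for budgeted uncertainty. The only difference is that the paper simply cites \cite{bertsimas2003robust} at that point, whereas you unpack the citation by spelling out the threshold enumeration over $\{0\}\cup\{d_i : i\in[n]\}$ and the part-wise greedy solution of the nominal problem --- a valid elaboration, not a different argument.
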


The second special case is for $\Gamma = 0$. Here, no adversarial action is possible, and problem \textsc{Rec} becomes
\[ \min_{\pmb{x}\in\X} \min_{\pmb{y}\in R(\pmb{x})} (\pmb{C}^t\pmb{x} + \underline{\pmb{c}}^t \pmb{y}) \]
This is a special case of the recoverable robust matroid basis problem with interval uncertainty studied in~\cite{lendl2019matroid},
where a strongly polynomial time algorithm for this problem is given. Our problem corresponds to the special case of partition matroids, 
hence, we arrive at the following observation.

\begin{obs}
The recoverable robust representatives multi-selection problem with discrete budgeted uncertainty and $\Gamma=0$ can be solved in strongly polynomial time.
\end{obs}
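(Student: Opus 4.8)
The plan is to exploit the fact that the budgeted uncertainty set degenerates to a single scenario when $\Gamma = 0$, and then to recognize the resulting deterministic two-stage problem as an instance of the recoverable robust matroid basis problem solved in \cite{lendl2019matroid}. No genuinely new combinatorial work is required; the task is to verify that the translation is faithful.

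First I would observe that for $\Gamma = 0$ the constraint $\sum_{i\in[n]} \delta_i \le \Gamma$ forces $\delta_i = 0$ for all $i$, so $\cU = \{\underline{\pmb{c}}\}$ is a single point. Consequently $\textsc{Adv}(\pmb{x}) = \textsc{Inc}(\pmb{x},\underline{\pmb{c}}) = \min_{\pmb{y}\in R(\pmb{x})} \underline{\pmb{c}}^t\pmb{y}$, and $\textsc{Rec}$ collapses to the nested minimization $\min_{\pmb{x}\in\X}\min_{\pmb{y}\in R(\pmb{x})}(\pmb{C}^t\pmb{x} + \underline{\pmb{c}}^t\pmb{y})$ displayed above. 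There is no adversary left to handle, so the problem becomes purely combinatorial.

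Next I would make the matroid structure explicit. The feasible set $\X$ is exactly the collection of bases of the partition matroid on $[n]$ whose independent sets contain at most $p_j$ elements of each part $T_j$; this matroid has rank $P$. The recovery set $R(\pmb{x}) = \{\pmb{y}\in\X : \Delta(\pmb{x},\pmb{y})\le 2k\}$ is precisely the set of bases that differ from $\pmb{x}$ by at most $k$ exchange operations, since a single exchange replaces one chosen element by one unchosen element of the same part and contributes $2$ to the Hamming distance. Hence the problem matches the format of the recoverable robust matroid basis problem with first-stage costs $\pmb{C}$, second-stage costs $\underline{\pmb{c}}$, and recovery bound $k$; interpreting each $\underline{c}_i$ as a degenerate interval $[\underline{c}_i,\underline{c}_i]$ exhibits it as a special case of the interval-uncertainty version treated in \cite{lendl2019matroid}.

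Finally I would invoke the strongly polynomial algorithm of \cite{lendl2019matroid}, which solves the recoverable robust matroid basis problem under interval uncertainty for arbitrary matroids. Since the partition matroid admits a trivial efficient independence oracle, this algorithm runs in strongly polynomial time on our instance, and its optimal value equals $\textsc{Rec}$. The only point requiring care is the bookkeeping of the reduction: confirming that the recovery metric (Hamming distance $2k$ versus $k$ basis exchanges) and the cost model coincide with the conventions of \cite{lendl2019matroid}, so that optimal values are preserved exactly and the claimed running time transfers verbatim.
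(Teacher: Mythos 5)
Your proposal is correct and takes essentially the same route as the paper: both arguments collapse $\cU$ to the single scenario $\underline{\pmb{c}}$, rewrite $\textsc{Rec}$ as the nested minimization $\min_{\pmb{x}\in\X}\min_{\pmb{y}\in R(\pmb{x})}(\pmb{C}^t\pmb{x} + \underline{\pmb{c}}^t\pmb{y})$, and invoke the strongly polynomial algorithm of \cite{lendl2019matroid} for the recoverable robust matroid basis problem with interval uncertainty, specialized to partition matroids. Your extra bookkeeping (checking that $\Delta(\pmb{x},\pmb{y})\le 2k$ for two bases of rank $P$ is equivalent to at most $k$ exchanges, and using degenerate intervals $[\underline{c}_i,\underline{c}_i]$) merely makes explicit what the paper leaves implicit.
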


\section{Mixed-integer programming formulations}
\label{sec:mips}

In the following, we introduce several problem formulations for the RRRMSP. Two models use an exponential number of variables and constraints and can be used in combination with an iterative column-and-row generation procedure, while the third model is compact (i.e., has a polynomial number of variables and constraints).

\subsection{First model}
\label{sec:first}

A straight forward approach to model the RRRMSP is to observe that the set $\cU$ is discrete with $S=\binom{n}{\Gamma}$ many scenarios (in a worst-case solution, we can assume that $\sum_{i\in[n]} \delta_i = \Gamma$). With slight abuse of notation, let $\{ \pmb{\delta}^1,\ldots,\pmb{\delta}^S\} = \cU$ be a list of these scenarios. Problem \textsc{Rec} can then be modeled as the following mixed-integer program.
\begin{subequations}
\label{eq:m1}  
\begin{align}
\textsc{Rec} = \min\ & \sum_{i\in [n]} C_i x_i + t \label{m1-1} \\
\text{s.t. } & t \ge \sum_{i\in[n]} (\underline{c}_i + d_i\delta^s_i) y^s_i & \forall s\in[S], i\in[n] \label{m1-2} \\
& \sum_{i\in T_j} x_i = p_j & \forall j\in[K] \label{m1-3} \\
& \sum_{i\in T_j} y^s_i = p_j & \forall s\in[S], j\in[K] \label{m1-4} \\
& z^s_i \le x_i & \forall s\in[S], i\in[n] \label{m1-5} \\
& z^s_i \le y^s_i & \forall s\in[S], i\in[n] \label{m1-6} \\
& \sum_{i\in[n]} z^s_i \ge P-k & \forall s\in[S] \label{m1-7} \\
& x_i \in \{0,1\} & \forall i\in[n] \\
& y^s_i \in \{0,1\} & \forall s\in[S],i\in[n] \\
& z^s_i \in \{0,1\} & \forall s\in[S],i\in[n] \label{m1-end}
\end{align}
\end{subequations}
Variables $\pmb{y}^s$ are introduced for each scenario $s\in[S]$ to model the recovery action under this scenario. Constraints \eqref{m1-3} and \eqref{m1-4} ensure that the right number of items is chosen from each part $T_j$. Variable $z^s_i$ indicates whether item $i$ is not modified under scenario $s$, i.e., if both $x_i$ and $y^s_i$ are active (see constraints~\eqref{m1-5} and \eqref{m1-6}). The number of non-modified items must be at least $P-k$ by constraint~\eqref{m1-7}. In the objective~\eqref{m1-1} we minimize the sum of first-stage costs and the worst case over all second-stage costs. This is modeled using variable $t$, which must be greater or equal to the cost in each scenario using constraint~\eqref{m1-2}.

To avoid solving a model with an exponential number of variables and constraints, an iterative procedure is possible, where we begin with a subset of scenarios $\cU' = \{ \pmb{\delta}^1,\ldots,\pmb{\delta}^{S'}\} \subseteq \cU$ and alternate between solving model~(\ref{eq:m1}) and solving the adversarial problem with the current candidate solution $\pmb{x}$ to generate a new scenario (see, e.g., \cite{zeng2013solving}). The procedure ends after possibly exponentially many iterations if the lower bound (from solving (\ref{eq:m1}) with a subset of scenarios) and the upper bound (from solving the adversarial problem) coincide. 

This method can benefit from an efficient technique to solve the adversarial problem as studied in Section~\ref{sec:adv}.

\subsection{Second model}
\label{sec:second}

In Section~\ref{sec:adv} we demonstrated that there are discrete sets $B$ and $A_j(\beta)$ containing candidate values for variables $\beta$ and $\pmb{\alpha}$ of the adversarial problem. Let
\[ \mathcal{C} = \{ (\beta,\alpha_1,\ldots,\alpha_K) : \beta \in B, \alpha_j \in A_j(\beta) \} =  \{ (\beta^1,\pmb{\alpha}^1), \ldots, (\beta^S,\pmb{\alpha}^S) \} \]
contain all combinations of such candidate values. Note that set $\mathcal{C}$ is of exponential size. For a fixed choice $(\beta^s,\pmb{\alpha}^s)\in \mathcal{C}$, problem~(\ref{eq:ad}) becomes the following selection problem:
\begin{subequations}
\label{eq:ads}  
\begin{align}
\max\ & \sum_{j\in[K]} p_j \alpha^s_j + (P-k)\beta^s \nonumber \\
& - \sum_{j\in[K]} \sum_{i\in T_j} \left( \left[ \alpha_j^s + x_i\beta^s - \underline{c}_i \right]_+ (1-\delta_i) + \left[ \alpha_j^s + x_i\beta^s - \underline{c}_i - d_i \right]_+ \delta_i \right)  \label{ads-2}\\
\text{s.t. } & \sum_{i\in[n]} \delta_i \le \Gamma \label{ads-3}\\
& \delta_i \in\{0,1\} & \forall i\in[n] \label{ads-4}
\end{align}
\end{subequations}
Note that we can relax variables $\delta_i$ without changing the optimal objective value, as this is simply a selection problem. Hence, we can relax and dualize this problem to find the following formulation:
\begin{subequations}
\label{eq:secmod}  
\begin{align}
\min\ & \sum_{i\in[n]} C_i x_i + t \label{secmod-1} \\
\text{s.t. } & t \ge \sum_{j\in[K]} p_j\alpha^s_j + (P-k)\beta^s - \sum_{j\in[K]} \sum_{i\in T_j} [\alpha^s_j + x_i\beta^s - \underline{c}_i]_+ + \Gamma\pi^s + \sum_{i\in[n]} \rho^s_i \hspace*{-10mm}& \forall s\in[S] \\
& \pi^s + \rho^s_i \ge [\alpha^s_j + x_i\beta^s - \underline{c}_i]_+ - [\alpha^s_j + x_i \beta^s - \underline{c}_i - d_i]_+ & \forall s\in[S], i\in[n] \\
& \sum_{i\in T_j} x_i = p_j & \forall j\in[K] \\
& x_i \in\{0,1\} & \forall i\in[n] \\
& \pi^s \ge 0 & \forall s\in[S] \\
& \rho^s_i \ge 0 & \forall s\in[S],i\in[n] \label{secmod-end}
\end{align}
\end{subequations}
Variables $\pi^s$ and $\rho^s_i$ are dual variables for constraints~\eqref{ads-3} and \eqref{ads-4}, respectively. The new variable $t$ is introduced to model the worst-case value over all possible scenarios $s\in[S]$. As before, this problem can be linearized in $\pmb{x}$ by using $[a+bx_i]_+ = [a+b]_+ x_i + [a]_+(1-x_i)$. Similar to the formulation given in Section~\ref{sec:first}, this model contains an exponential number of variables and constraints. However, we do not model each choice of item cost increase explicitly, as before. Instead, the adversary is expressed by $(\beta^s,\pmb{\alpha}^s)$, and therefore takes the incremental problem into account as well. We can thus expect this formulation to be more effective in combination with an iterative variable-and-constraint generation procedure. We evaluate this experimentally in Section~\ref{sec:experiments}.

\subsection{Third model}
\label{sec:compact}

We now introduce a third approach to model the RRRMSP that makes use of the dynamic program introduced in the proof of Theorem~\ref{th:adv} to require only polynomially many variables and constraints.
As explained in Section~\ref{sec:second}, we can dualize the adversarial problem~\eqref{eq:ads} for a fixed choice of $\beta$ and $\pmb{\alpha}$ to find:
\begin{align*}
\textsc{Adv}_j(\pmb{x},\beta,\Gamma_j,\alpha_j) = \min\ &(P-k)\beta + p_j\alpha_j - \sum_{i\in T_j} [\alpha_j + x_i\beta - \underline{c}_i]_+ + \Gamma_j \pi + \sum_{i\in T_j} \rho_i \\
\text{s.t. } & \pi + \rho_i \ge [\alpha_j + x_i\beta - \underline{c}_i]_+ - [\alpha_j + x_i \beta - \underline{c}_i - d_i]_+ & \forall i\in T_j \\
& \pi \ge 0 \\
& \rho_i \ge 0 & \forall i\in T_j
\end{align*}
As $\textsc{Adv}_j(\pmb{x},\beta,\Gamma_j) = \max_{\alpha\in A_j(\beta)} \textsc{Adv}_j(\pmb{x},\beta,\Gamma_j,\alpha)$, we find that
\begin{align*}
\textsc{Adv}_j(\pmb{x},\beta,\Gamma_j) = &(P-k)\beta \\
+ \min\ & t_j \\
\text{s.t. } & t_j \ge p_j \alpha - \sum_{i\in T_j} [\alpha + x_i\beta - \underline{c}_i]_+ + \Gamma_j \pi_j^\alpha + \sum_{i\in T_j} \rho_i^\alpha & \forall \alpha \in A_j(\beta) \\
& \pi_j^\alpha + \rho_i^\alpha \ge [\alpha + x_i\beta - \underline{c}_i]_+ - [\alpha + x_i \beta - \underline{c}_i - d_i]_+ & \forall i\in T_j, \alpha\in A_j(\beta)\\
& \pi_j^\alpha \ge 0 & \forall \alpha\in A_j(\beta) \\
& \rho_i^\alpha \ge 0 & \forall i\in T_j , \alpha\in A_j(\beta)
\end{align*}
This model enables us to evaluate $\textsc{Adv}_j(\pmb{x},\beta,\Gamma_j)$ during the optimization process (instead of during a preprocessing step as performed for the dynamic program). Each state of the dynamic program can now be interpreted as a node $(K',\Gamma')$ in a directed acyclic graph. The aim of the adversarial problem is to find a path from an artificial node $(0,0)$ (representing the starting state) to the node $(K,\Gamma)$ (representing the state when at most $\Gamma$ many costs have been increased up to and including part $T_K$). Traversing an edge from $(K',\Gamma')$ to $(K'+1,\Gamma'')$ with $\Gamma''\ge \Gamma'$ means that a budget of $\Gamma''-\Gamma'$ is invested in part $K'+1$. The corresponding longest path problem can be relaxed and dualized, which gives the following compact formulation for $\textsc{Rec}$:
\begin{subequations}
\label{eq:com}  
\begin{align}
\min\ & \sum_{i\in[n]} C_i x_i + t \label{com-1} \\
\text{s.t. } & t \ge (P-k)\beta + s^\beta_{K+1,\Gamma} & \forall \beta \in B \label{com-2}\\
& s^\beta_{j+1,\gamma'} \ge s^\beta_{j,\gamma} + c^\beta_{j,\gamma'-\gamma} & \forall \beta \in B, \gamma\in 0\,\ldots,\Gamma, \gamma'\ge \gamma, j \in [K] \label{com-3}\\
& s^\beta_{K+1,\gamma} \ge s^\beta_{K+1,\gamma-1} & \forall \beta\in B, \gamma\in[\Gamma] \label{com-4}\\
& s^\beta_{1,\gamma} =0 & \forall \beta\in B, \gamma\in \{0,\ldots,\Gamma\} \label{com-5}\\
& c^\beta_{j,\gamma} \ge  p_j \alpha - \sum_{i\in T_j} [\alpha + x_i\beta - \underline{c}_i]_+ + \gamma \pi^{\beta,\gamma,\alpha}_j + \sum_{i\in T_j} \rho^{\beta,\gamma,\alpha}_i \hspace{-5cm} \nonumber\\
& & \forall \beta \in B, j \in [K], \gamma \in \{0,\ldots,\Gamma\}, \alpha \in A_j(\beta) \label{com-6} \\
& \pi^{\beta,\gamma,\alpha}_j + \rho^{\beta,\gamma,\alpha}_i \ge [\alpha + x_i\beta - \underline{c}_i]_+ - [\alpha + x_i \beta - \underline{c}_i - d_i]_+ \hspace{-5cm} \nonumber\\
& & \forall \beta\in B, j\in [K], \gamma\in \{0,\ldots,\Gamma\}, \alpha \in A_j(\beta), i\in T_j \label{com-7}\\
& \sum_{i\in T_j} x_{ji} = p_j & \forall j\in [K] \label{com-8}\\
& x_{ji} \in\{0,1\} & \forall j\in[K], i\in T_j \label{com-9}\\
& \pi^{\beta,\gamma,\alpha}_j \ge 0 & \forall \beta\in B, j\in[K], \gamma\in\{0,\ldots,\Gamma\}, \alpha\in A_j(\beta) \label{com-10} \\
& \rho^{\beta,\gamma,\alpha}_i \ge 0 & \forall \beta\in B, j\in[K], \gamma\in\{0,\ldots,\Gamma\}, \alpha\in A_j(\beta), i\in T_j \label{com-11}
\end{align}
\end{subequations}
We make use of variables $c^\beta_{j,\gamma}$ that denote the costs $\textsc{Adv}_j(\pmb{x},\beta,\gamma)$. Variables $s^\beta_{j,\gamma}$ are the dual variables corresponding to the nodes of the graphs (node potentials). For ease of notation, the index $j$ is increased by one.
The objective~\eqref{com-1} is to minimize the sum of first-stage costs and worst-case second-stage costs, represented by variable $t$. In constraints~\eqref{com-2} we ensure that $t$ is the maximum over all candidate values $\beta$ for the adversarial problems. The right-hand side of \eqref{com-2} uses variable $s^\beta_{K+1,\Gamma}$ to express the value of the longest path in the graph modeling the adversarial problem. Constraints~(\ref{com-3}-\ref{com-5}) model the dual constraints to this problem, where $c^\beta_{j,\gamma}$ represents the arc lengths. These are variables, which depend on the worst-case choice of $\alpha\in A_j(\beta)$. This is ensured by constraints~\eqref{com-6} and \eqref{com-7}. As before, brackets $[\cdot]_+$ can be linearized using standard methods. Finally, constraint~\eqref{com-9} ensures that $\pmb{x}\in\X$. Note that the size of the model is polynomial in the input with $O(n^4\Gamma)$ many variables and constraints.

Next we consider the integrality gap of this formulation. In Table~\ref{tab:example2} we present an example instance consisting of two parts, from each of which one item must be selected. As before, $\Gamma=k=1$.
\begin{table}[htb]
\begin{center}
\begin{tabular}{r|rrr|rr}
 & \multicolumn{3}{c|}{$T_1$} & \multicolumn{2}{c}{$T_2$} \\
 & 1 & 2 & 3 & 4 & 5 \\
 \hline
$C_i$ & 0 & 0 & 0 & 0 & 1\\
$\underline{c}_i$ & 0 & 0 & 0 & 1 & 0 \\
$\overline{c}_i$ & 1 & 1 & 1 & 1 & 0
\end{tabular}
\end{center}
\caption{Example problem with $\Gamma=k=1$.\label{tab:example2}}
\end{table}
Note that an optimal integral solution has objective value 1. Let us assume without loss of generality that item 1 is packed. If also item 4 is packed, the adversary attacks item 1, and either exchanging item 1 or item 4 results in an objective value 1. If item 5 is packed instead of item 4, we must pay one unit in the first stage but can remain without additional costs in the second stage.

Now consider the fractional solution $\pmb{x}=(1/3,1/3,1/3,2/3,1/3)$. We have $B=\{-1,0,1\}$, and careful examination of all problems $\textsc{Adv}_j(\pmb{x},\beta,\Gamma_j)$ reveals that none of these results in an adversary value larger than 0. We can therefore make the following observation.

\begin{obs}
The compact problem formulation (\ref{eq:com}) has an unbounded integrality gap.
\end{obs}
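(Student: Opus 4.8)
The plan is to establish an unbounded \emph{ratio} between the integer optimum $\textsc{Rec}$ and the optimum $\textsc{Rec}_{\mathrm{LP}}$ of the linear relaxation of~\eqref{eq:com}. A single instance cannot give an unbounded ratio, so I would read the instance of Table~\ref{tab:example2} as the seed $m=3$ of a family: I keep $T_2=\{4,5\}$, $\Gamma=k=1$ and the cost pattern of items $4,5$ fixed, but let the first part consist of $m$ identical copies of the item with $C_i=0$, $\underline c_i=0$, $\overline c_i=1$. First I would pin down the integer optimum at $1$ for every $m$. By the symmetry of the $T_1$-items it suffices to distinguish the two choices in $T_2$. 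If item $4$ is selected together with some $T_1$-item, the first-stage cost is $0$, the adversary attacks the selected $T_1$-item (the only attack with $d_i>0$ that is not neutralised by the recovery), and every recovery---keeping both items, exchanging the attacked $T_1$-item, or exchanging item $4$ for item $5$---costs exactly $1$. If item $5$ is selected, the first stage already pays $C_5=1$ while the second stage can be kept at $0$. Hence $\textsc{Rec}=1$ for all $m$.

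Next I would bound $\textsc{Rec}_{\mathrm{LP}}$ from above by evaluating~\eqref{eq:com} at the fractional point that puts mass $1/m$ on each $T_1$-item and sets $x_4=1-\tfrac1m$, $x_5=\tfrac1m$ (for $m=3$ this is the vector $(1/3,1/3,1/3,2/3,1/3)$ of the table). Its first-stage cost is $C_5x_5=1/m$, so it remains to show that the adversary value modelled by $t$ equals $0$. The decisive point is that in the relaxation each term $[\alpha+x_i\beta-\underline c_i]_+$ is replaced by its linearisation $[\alpha+\beta-\underline c_i]_+x_i+[\alpha-\underline c_i]_+(1-x_i)$; because $[\cdot]_+$ is convex this over-estimates the true positive part at fractional $x_i$, and since these terms enter the adversary objective with a negative sign, the relaxed adversary value is genuinely smaller than a naive substitution of the fractional $\pmb x$ would suggest. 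With $B=\{-1,0,1\}$ I would, for each $\beta\in B$, run over the kink candidates $\alpha\in A_j(\beta)$ and the budget split $\Gamma_1+\Gamma_2\le\Gamma$, and check that the corresponding value of the adversarial dynamic program is at most $0$. The binding case is $\beta=1$: writing the value as $(P-k)\beta$ plus the two parts' contributions (each without this offset), the optimal split $(\Gamma_1,\Gamma_2)=(1,0)$ gives $1+\bigl(-1+\tfrac1m\bigr)+\bigl(-\tfrac1m\bigr)=0$, while $\beta=0$ gives $0$ and $\beta=-1$ a negative number. Thus $t=0$ and $\textsc{Rec}_{\mathrm{LP}}\le 1/m$.

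Putting the two bounds together gives $\textsc{Rec}/\textsc{Rec}_{\mathrm{LP}}\ge m\to\infty$, so the integrality gap is unbounded. I expect the genuine work to lie entirely in the second step: one must carry out the linearised adversary evaluation at a fractional $\pmb x$ correctly, keeping in mind that the relaxed $[\cdot]_+$ terms are convex-combination over-estimates rather than exact positive parts, and then enumerate all $\beta\in B$ together with all $\alpha\in A_j(\beta)$ and all budget distributions to confirm non-positivity of every contribution. Once this evaluation is in place, the value of the integer optimum and the final ratio are immediate.
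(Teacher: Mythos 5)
Your proposal is correct and takes essentially the same route as the paper: the instance of Table~\ref{tab:example2} with the fractional point $(1/3,1/3,1/3,2/3,1/3)$ (generalized to mass $1/m$ on the first part and $x_4=1-\tfrac1m$, $x_5=\tfrac1m$), followed by enumeration over $\beta\in B=\{-1,0,1\}$, the kink values $\alpha\in A_j(\beta)$ and the budget splits to certify that the relaxed adversary value is $0$, so the LP value is at most $1/m$ while the integer optimum is $1$. In fact you make explicit a step the paper leaves implicit: the printed instance alone only certifies a gap of $3$, and your $m$-parameterized family is exactly what is needed to conclude that the gap is unbounded.
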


This means in particular that we cannot expect algorithms based on rounding the linear relaxation of the compact formulation to result in approximation guarantees. A natural question is thus to consider whether local search methods may provide an approximation guarantee.

\section{Experiments}
\label{sec:experiments}

\subsection{Setup}

The aim of this section is to compare the behavior of the exact solution methods described in Section~\ref{sec:mips} depending on instance parameters. The models we compare are:
\begin{enumerate}
\item Model (\ref{eq:m1}) from Section~\ref{sec:first} in combination with iterative variable-and-constraint generation. We refer to this approach as M1.
\item Model (\ref{eq:secmod}) from Section~\ref{sec:second} in combination with iterative variable-and-constraint generation. We refer to this approach as M2.
\end{enumerate}
Note that the compact model (\ref{eq:com}) from Section~\ref{sec:compact} is a natural additional comparator approach. However, preliminiary experiments show that this approach does not scale well in the problem size due to the large number of variables. Hence, it is not considered in the following experiments.

To explain the experimental setup, we first consider the number of scenarios $S$ that can potentially be generated using approaches M1 and M2. As noted in Section~\ref{sec:first}, we can bound the number of scenarios $S$ considered in M1 by $\binom{n}{\Gamma}$, which is in $O(n^\Gamma)$. The model uses up to $O(nS)$ many variables and constraints. To estimate
\[ \mathcal{C} = \{ (\beta,\alpha_1,\ldots,\alpha_K) : \beta \in B, \alpha_j \in A(\beta) \} =  \{ (\beta^1,\pmb{\alpha}^1), \ldots, (\beta^S,\pmb{\alpha}^S) \} \]
and thus the maximum number of iterations for M2, we note that
\[ |\mathcal{C}| = |B| \prod_{j\in[K]} |A(\beta)| = O(n^2 \prod_{j\in[K]} |T_j|) = O(n^2\left(\frac{n}{K}\right)^K ) \]
M2 uses $O(n|\mathcal{C}|)$ up to many variables and constraints.
We can therefore expect a sensitivity of M1 with respect to parameter $\Gamma$, while the efficiency of M2 will be less dependent on $\Gamma$, and more sensitive to changes in $K$. We therefore generate two sets of instances:
\begin{itemize}
\item Set $I_1$, where $K=10$ and $n_j=10$ for all $j\in[K]$, and $k=\Gamma/2$. We consider $\Gamma\in \{2,4,6,\ldots,100\}$ and generate 200 instances for each value of $\Gamma$ (a total of $10,000$).

\item Set $I_2$, where $n_j=3$ for all $j\in[K]$, $\Gamma=K$, and $k=\Gamma/2$. We consider $K\in \{2,4,6,\ldots,60\}$ and generate 200 instances for each value of $K$ (a total of $6,000$).
\end{itemize}
For all instances, values $C_{ij}$, $\underline{c}_{ij}$ and $d_{ij}$ are chosen uniformly i.i.d.\! from $\{1,2,\ldots,100\}$. Values $p_j$ are chosen uniformly i.i.d.\! from $\{1,2,\ldots,n_j-1\}$.

Each instance is solved using M1 and M2, where we apply a time limit of 900 seconds. Models are solved using IBM CPLEX 12.8 on a virtual machine with Intel Xeon CPU E7-2850 processors using only one thread. Both types of master- and sub-problems were solved using CPLEX.

\subsection{Results}

We first discuss the results for instances $I_1$, where only $\Gamma$ is modified and the remaining instance size is left fixed. In Figure~\ref{fig:opt} we show the fraction of instances (out of 200) per parameter value that can be solved to optimality within the time limit. The case of instances $I_1$ is presented in Figure~\ref{fig:opt1}. A V-shaped decrease in instances solved to optimality is visible for M1, while this is not present for M2 (only a single instance was not solved to optimality using M2). The worst performance for M1 is at $\Gamma=16$, where only 113 instances could be solved. Note that a V-shape is not unexpected, as $\binom{n}{\Gamma}=\binom{n}{n-\Gamma}$. Figure~\ref{fig:opt1} demonstrates that M2 is less affected by $\Gamma$ than M1. 

\begin{figure}[htb]
\begin{center}
\subfigure[Instances $I_1$.\label{fig:opt1}]{\includegraphics[width=0.48\textwidth]{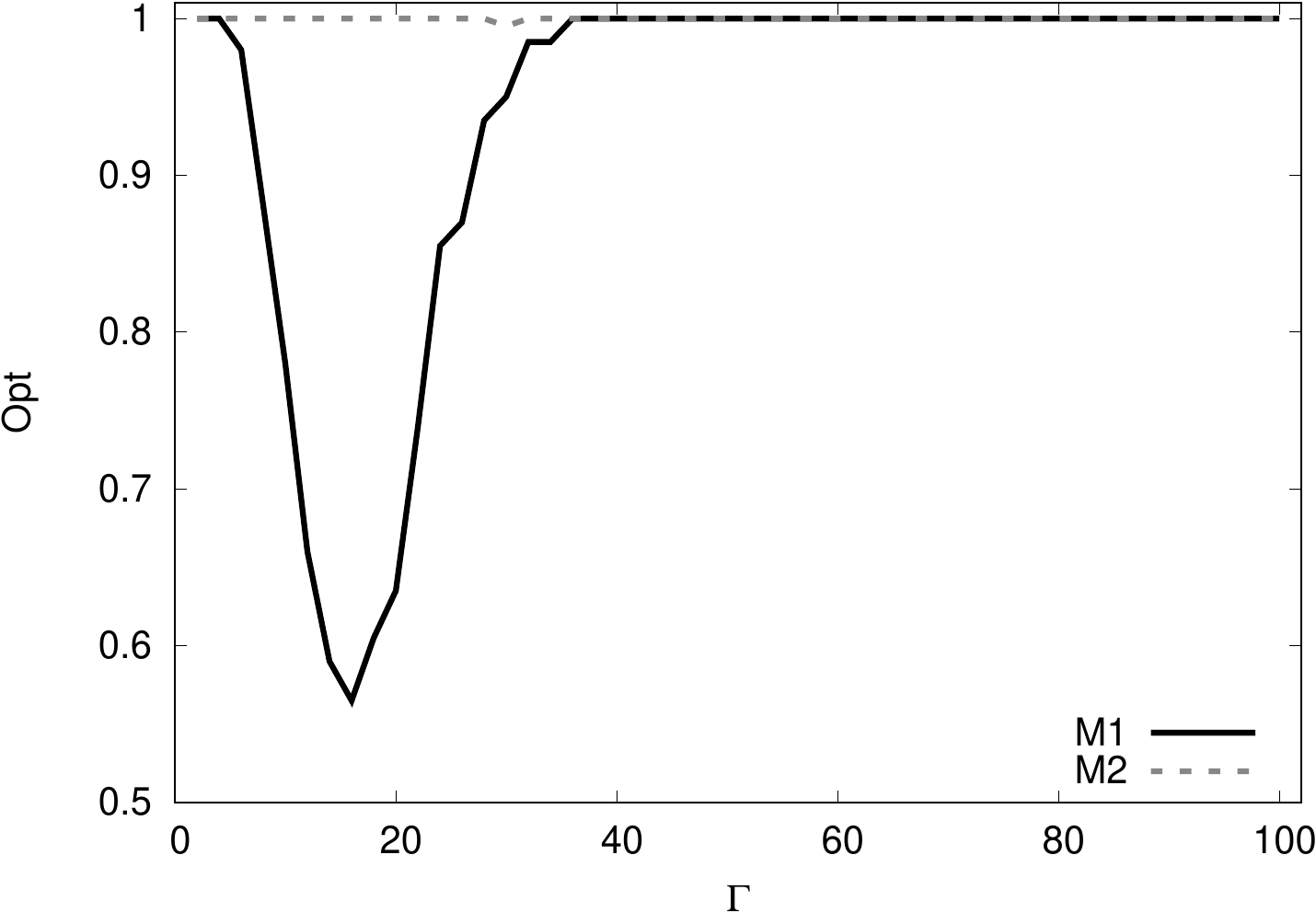}}%
\hfill
\subfigure[Instances $I_2$.\label{fig:opt2}]{\includegraphics[width=0.48\textwidth]{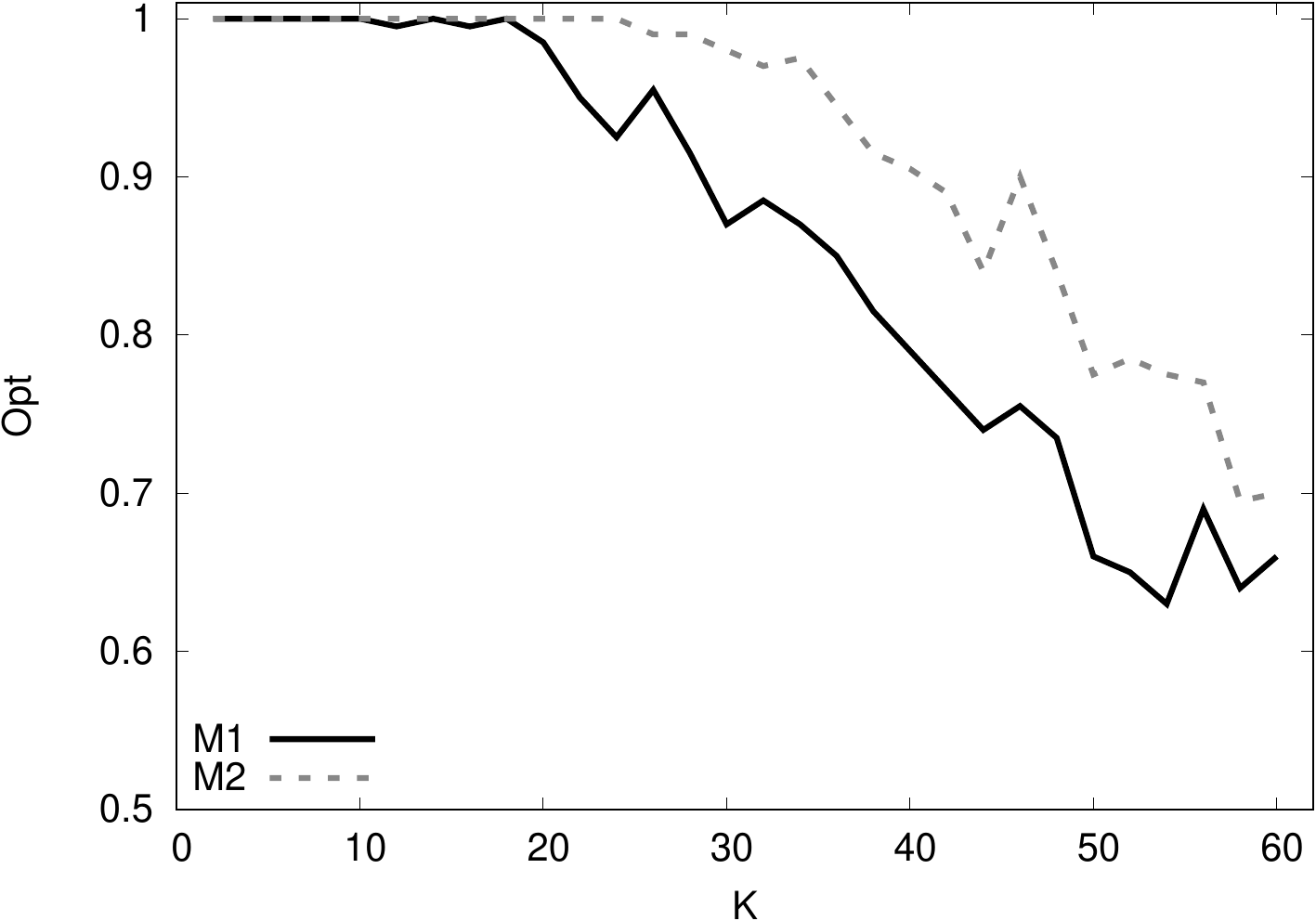}}
\end{center}
\caption{Ratio of instances solved to optimality for varying $\Gamma$ ($I_1$) and $K$ ($I_2$).\label{fig:opt}}
\end{figure}

This can be seen in more detail in Figure~\ref{fig:time1}, where we show the average computation time for M1 and M2. Any instance that could not be solved to optimality counts as 900 seconds towards this average. Note the logarithmic vertical axis. The peak in computation times for M2 around $\Gamma=16$ is visible again; at the same time, we see that also approach M2 follows a similar curve as M1, while much less severe.

\begin{figure}[htb]
\begin{center}
\subfigure[Instances $I_1$.\label{fig:time1}]{\includegraphics[width=0.48\textwidth]{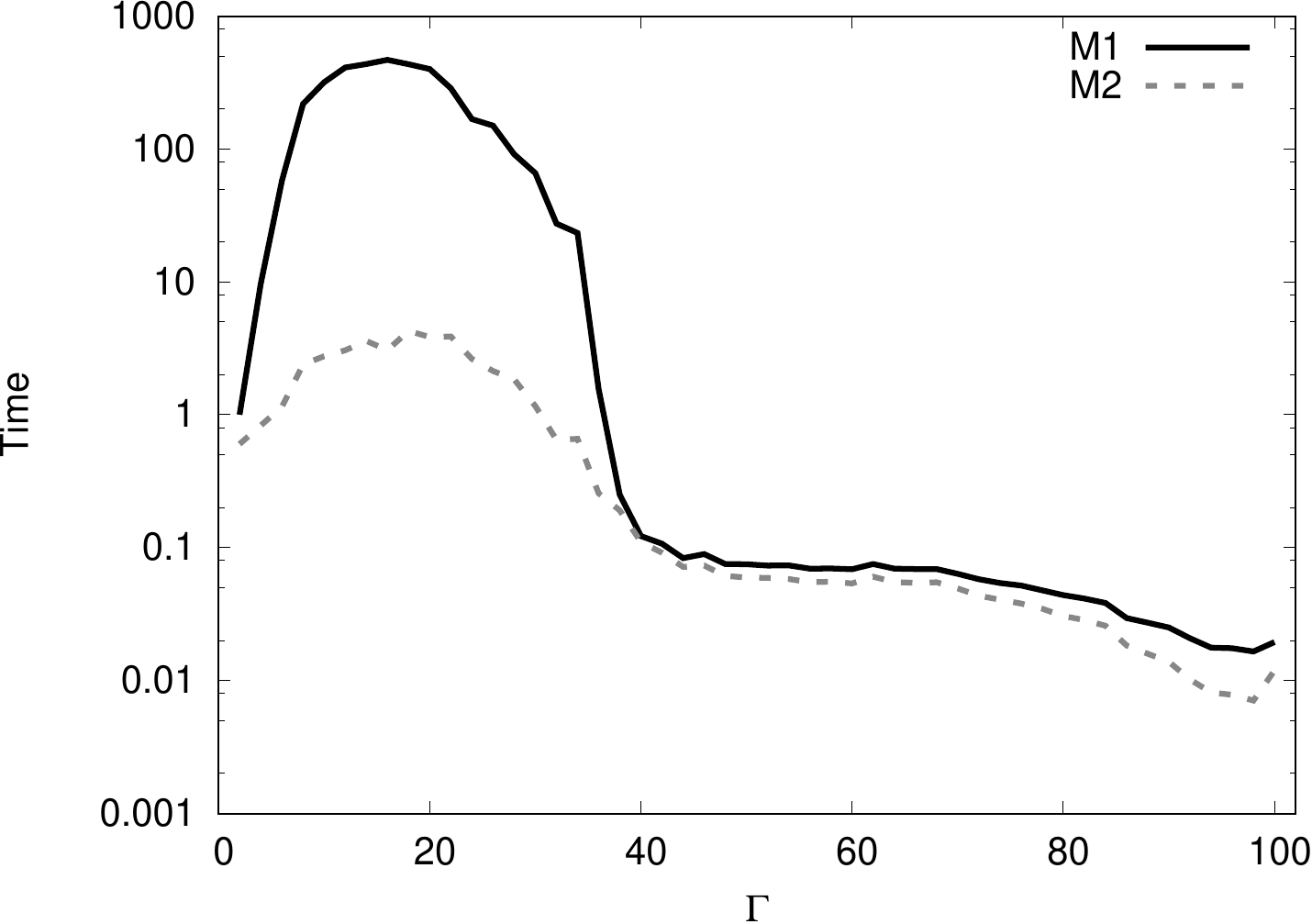}}%
\hfill
\subfigure[Instances $I_2$.\label{fig:time2}]{\includegraphics[width=0.48\textwidth]{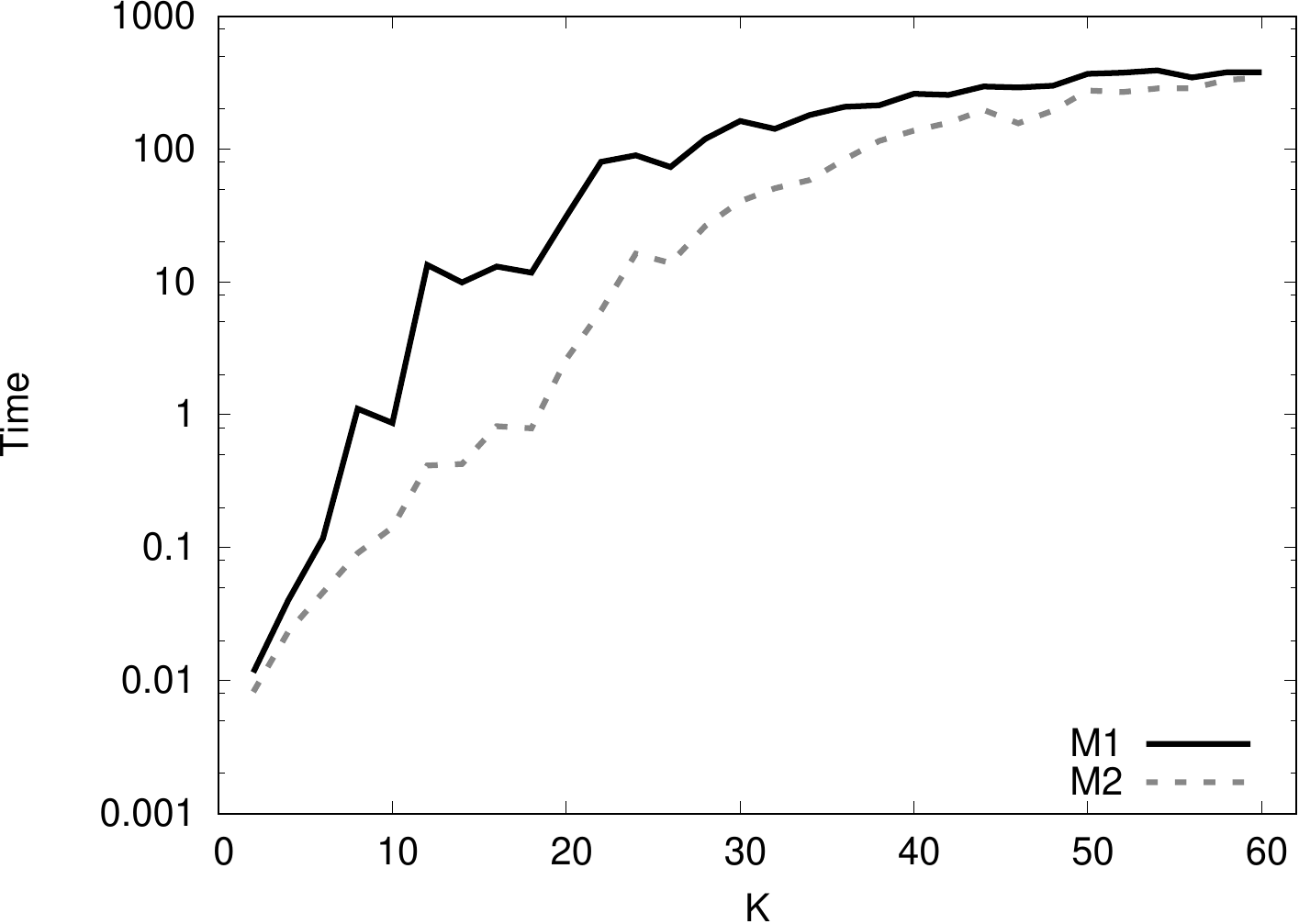}}
\end{center}
\caption{Average truncated solution time in seconds for varying $\Gamma$ ($I_1$) and $K$ ($I_2$).\label{fig:time}}
\end{figure}	

A direct comparison between computation times for each instance is given in Figure~\ref{fig:tvst1}. Here, each point corresponds to one of the $10,000$ instances (note that both axes are logarithmic). A point below the diagonal indicates that M2 requires less time than M1 on this instance. For only few instances of $I_1$ it is faster to use M1 instead of M2, and none of these require more than one second to solve.

\begin{figure}[htb]
\begin{center}
\subfigure[Instances $I_1$.\label{fig:tvst1}]{\includegraphics[width=0.48\textwidth]{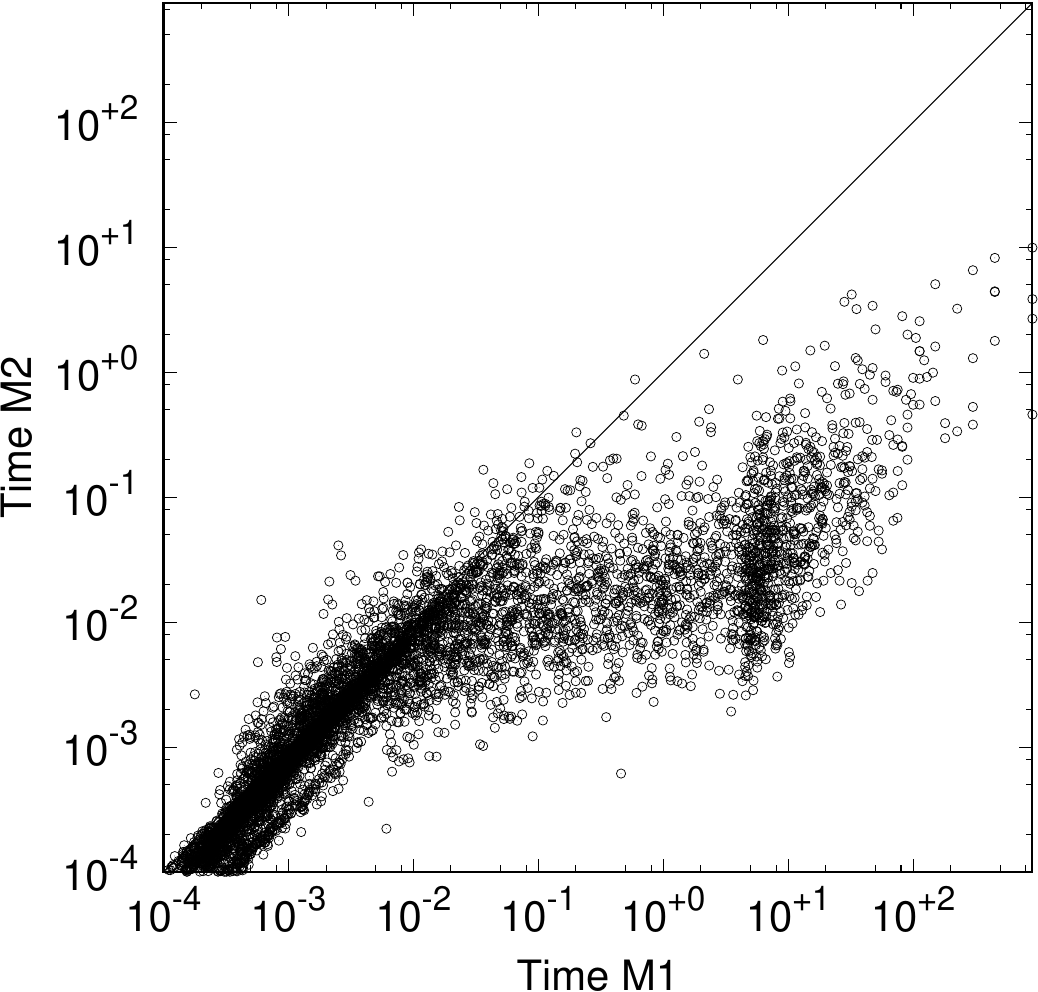}}%
\hfill
\subfigure[Instances $I_2$.\label{fig:tvst2}]{\includegraphics[width=0.48\textwidth]{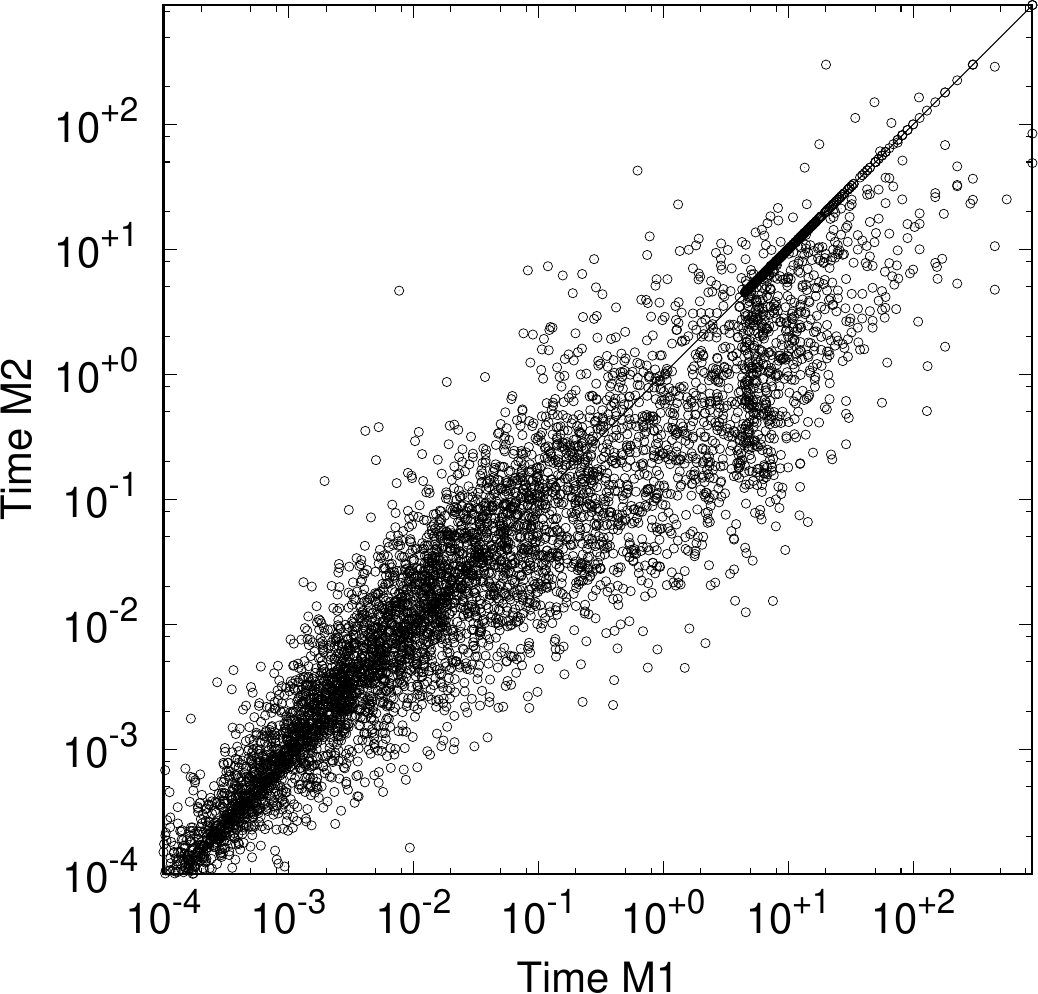}}
\end{center}
\caption{Instance-by-instance comparison of solution times.\label{fig:tvst}}
\end{figure}

Finally, we also present the average number of iterations required to prove optimality by each method in Figure~\ref{fig:it1}. In this average we only consider those instances that were solved to optimality by both methods. The minimum number of iterations is 2 (lower and upper bounds coincide after generating one scenario), and is achieved on all instances by both methods for $\Gamma \ge 54$. Instances with $\Gamma=16$ require an average of $18.25$ iterations to solve by M1, while this remains at $9.66$ for M2.

\begin{figure}[htb]
\begin{center}
\subfigure[Instances $I_1$.\label{fig:it1}]{\includegraphics[width=0.48\textwidth]{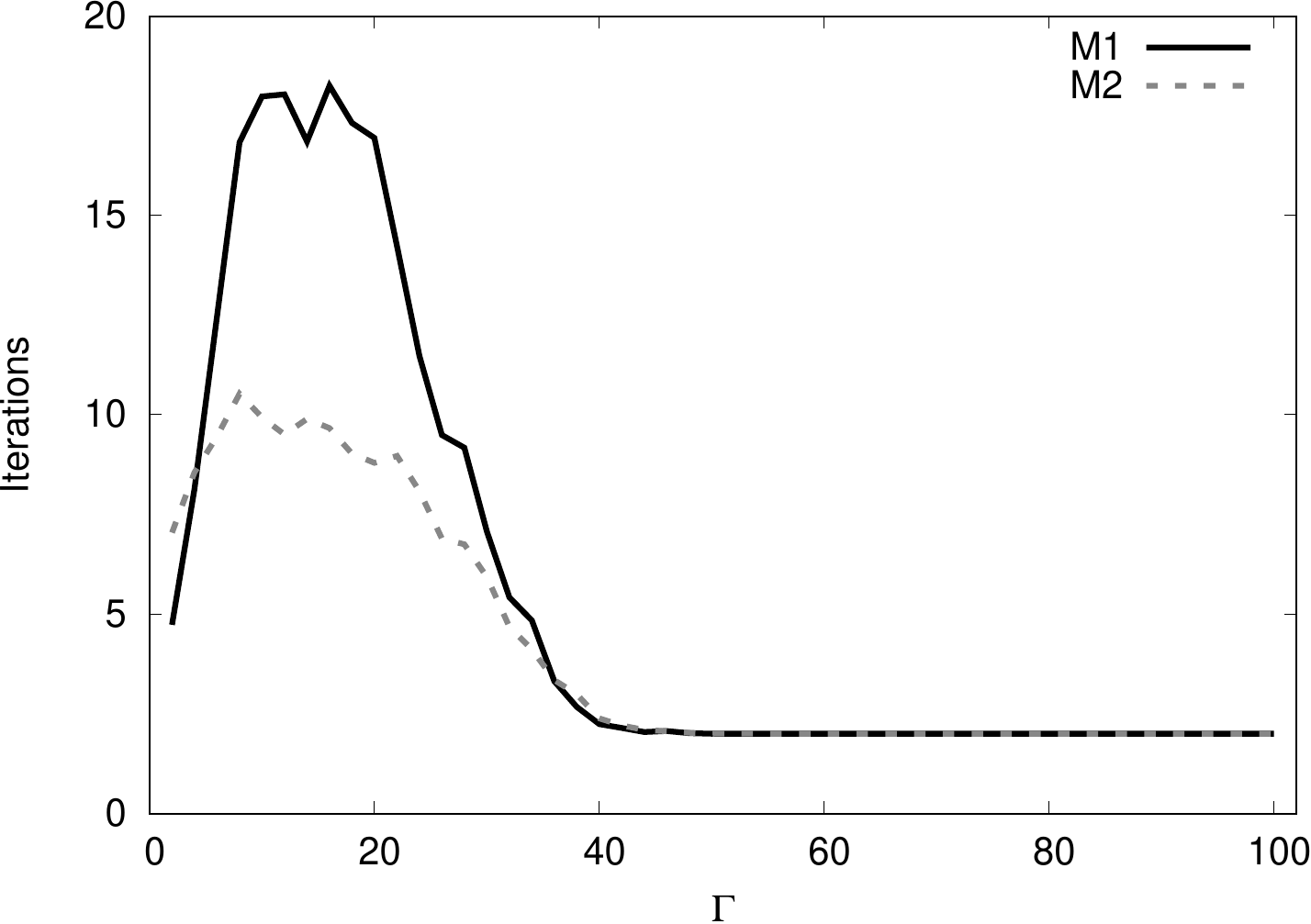}}%
\hfill
\subfigure[Instances $I_2$.\label{fig:it2}]{\includegraphics[width=0.48\textwidth]{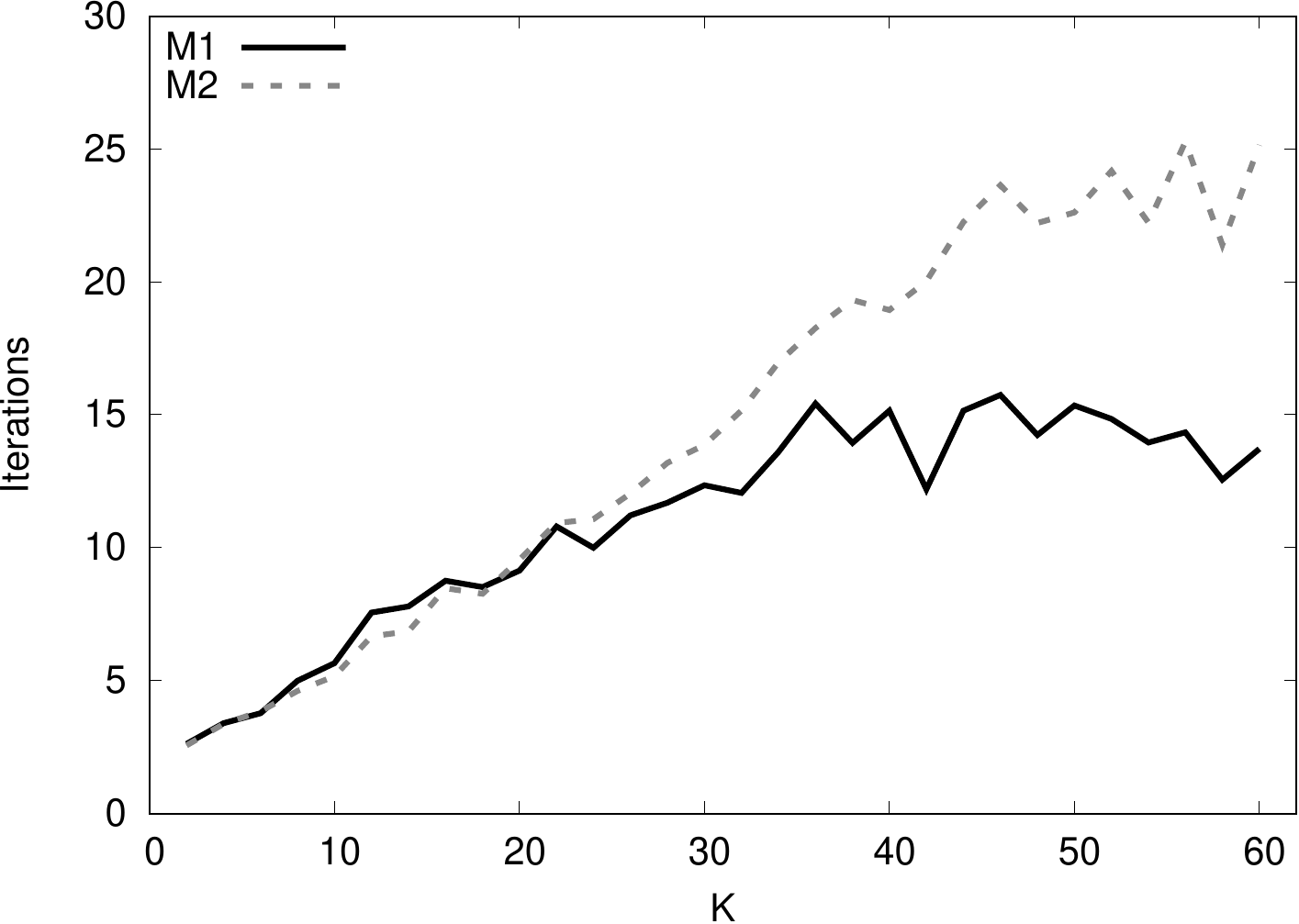}}
\end{center}
\caption{Average number of iterations for varying $\Gamma$ ($I_1$) and $K$ ($I_2$).\label{fig:it}}
\end{figure}	

We now compare these findings with the results for instance set $I_2$. Note that with increasing value $K$, also the number of items $n$, and the parameters $\Gamma$ and $k$ increase. We therefore expect both M1 and M2 to perform worse for increasing $K$. Considering the number of instances solved to optimality as presented in Figure~\ref{fig:opt2}, this can be confirmed. The number of instances solved to optimality by M1 remains lower than those solved by M2. The average computation times as shown in Figure~\ref{fig:time2} also increase in $K$, where M2 is faster on average. A direct comparison of computation times for each instance as presented in Figure~\ref{fig:tvst2} shows that points are much more closely aligned to the diagonal than in Figure~\ref{fig:tvst1}. This indicates that M2 remains the better approach for most instances, but results tend to be more mixed. In one specific instance, M2 required 300 seconds, while M1 only required 20 seconds.

Considering the average number of iterations given in Figure~\ref{fig:it2}, we find another difference to instance set $I_1$. Up to around $K=20$, both approaches require a similar number of iterations. For larger values of $K$, this number keeps on increasing for M2, whereas this is not the case for M1. In comparison with Figure~\ref{fig:it1}, we see that the maximum average number of iterations for M1 is even smaller for $I_2$ than for $I_1$.

To summarize these findings, we note that M2 can solve all instances (but one) of size $10\times 10$ in a matter of a few seconds, and clearly outperforms M1 in these cases, particularly for parameters $\Gamma$ in the region of $[\sqrt{n},3\sqrt{n}]$. Due to the way scenarios are generated, the performance of M2 depends less on $\Gamma$ than the performance of M1. For instances of size $K \times 3$ with increasing $K$, this advantage of M2 is less pronounced. While M2 continues to show the better performance, results tend to be more mixed.

Overall, M2 performs better, which highlights the importance of the structural insight into the adversarial problem gained in Section~\ref{sec:adv}.

\section{Conclusions}
\label{sec:conclusions}

In this paper we considered the following robust variant of the representatives selection problem. Given $K$ sets of items called parts, we first choose $p_j$ items from each part $j\in[K]$. Then an adversary creates a cost scenario by increasing the costs of up to $\Gamma$ many items. We can now update our solution by exchanging up to $k$ items over all parts. The aim is to minimize the overall costs given by the costs of the first-stage solution, and the worst-case costs of the second-stage solution.

While this problem has been introduced nearly a decade ago, its complexity has remained open. This is perhaps not surprising, as an optimal solution can be quite counter-intuitive. An example provided in this paper demonstrates that it may be necessary to choose a dominated item in the first stage, i.e., an item that is worse with respect to every cost vector compared to another item from the same part.

We showed that it is possible to solve the adversarial problem in polynomial time and used this result to derive a compact mixed-integer programming formulation for the recoverable robust problem. We further prove that the recoverable robust problem is NP-hard, even if $n_j=2$ and $p_j=1$ for all parts $j\in[K]$. We also showed that the special case with $K=1$ is NP-hard, even if $k=1$. In the special case that $k=\Gamma=1$, this problem allows a polynomial time solution algorithm. This method is based on identifying two basic strategies that represent all possible attacks of the adversary. We can then construct a solution for each of these strategies, such that this strategy is in fact preferred by the adversary over the other. The better of these two solutions is optimal for the recoverable robust problem.

In computational experiments we compared two iterative methods that scale differently in the problem parameters. While the straight-forward scenario generation method is sensitive to the size of $\Gamma$, a second method that is based on structural insight into the adversarial problem can be expected to be more sensitive to the number of parts $K$.

Many interesting avenues for further research arise. 
While these results cover many complexity aspects of the problem, the complexity for constant $\Gamma$ remains open. Additionally, our reductions are based on weakly NP-hard problems, which means that it remains an open question if pseudopolynomial solution methods exist.
Beyond the problem considered here, the complexity of other problem variants remains open as well. These include two-stage representatives selection (where an incomplete solution is built in the first stage) or other uncertainty sets, such as continuous budgeted uncertainty.

\appendix

\section{Proof of Theorem~\ref{th:hardness3}}

We first recall the defition of the two-stage robust selection problem from \cite{chassein2018recoverable}. Let $\X' = \{ \pmb{x}\in\{0,1\}^n : \sum_{i\in[n]} x_i \le p\}$ the set of feasible first-stage solutions. Given $\pmb{x}\in\X'$, the set of feasible second-stage solutions is then $\X(\pmb{x}) = \{ y\in\{0,1\}^n : \sum_{i\in[n]} x_i + y_i = p,\ x_i+y_1 \le 1 \ \forall i\in[n]\}$. As before, let $\cU$ be a discrete budgeted uncertainty set. The two-stage robust selection problem is then given as
\[ \min_{\pmb{x}\in\X'} \max_{\pmb{c}\in\cU} \min_{y\in\X(\pmb{x})} \pmb{C}^t\pmb{x} + \pmb{c}^t\pmb{y} \]
We now prove hardness of this problem (Theorem~\ref{th:hardness3}).

\begin{proof}
Let an instance of \textsc{Partition} be given, consisting of a list of values $A=\{a_1,\ldots,a_n\}$. The task is to identify a set $A_0\subseteq A$ such that $\sum A_0 = 1/2 \sum A =: Q$. We construct an instance of the two-stage robust selection problem using $2n$ items, $p=n+1$, and $\Gamma=n$. 

\begin{table}[htb]
\begin{center}
\begin{tabular}{c|ccc|ccc}
 \multicolumn{1}{c}{} & \multicolumn{3}{c}{$\alpha$}& \multicolumn{3}{c}{$\beta$}\\[-1ex]
 \multicolumn{1}{c}{} & \multicolumn{3}{c}{\downbracefill} & \multicolumn{3}{c}{\downbracefill}\\[2ex]
 & 1 & $\dots$ & $n$ & $n+1$ & $\dots$ & $2n$ \\
 \hline
$C_i$ & $\infty$ & $\dots$ & $\infty$ & $a_1$ & $\dots$ & $a_n$ \\
$\underline{c}_i$ & $M$ & $\dots$ & $M$ & 0 & $\dots$ & 0 \\
$\overline{c}_i$ & $M+2Q$ & $\dots$ & $M+2Q$ & $2a_1$ & $\dots$ & $2a_n$
\end{tabular}
\end{center}
\caption{Instance used in the hardness reduction for two-stage robust selection.\label{tab:h3}}
\end{table}

There are $n$ items of type $\alpha$ and of type $\beta$. Items of type $\alpha$ all have $C_i=\infty$, $\underline{c}_i=M$ for a sufficiently large constant $M > 2Q$, and $\overline{c}_i = M+2Q$. Here, $\infty$ denotes a large value such that packing such an item immediately disqualifies a solution from being optimal. As the following analysis shows, $\infty > M+4Q$ is sufficient.
Items of type $\beta$ each correspond to one of the given values $a_i$, with $C_i = a_i$, $\underline{c}_i=0$, and $\overline{c}_i=2a_i$. Note that an optimal first-stage solution does not buy any items of type $\alpha$. The adversary now has two strategies to distribute the uncertainty budget $\Gamma$: Either the costs of all items in $\alpha$ are increased, or all costs of items in $\beta$ are increased. In both cases, a second-stage solution will buy the remaining items in $\beta$ and one item in $\alpha$. In the first strategy, the costs of this item from $\alpha$ is $M+2Q$, in the second case it is $M$. Note that other strategies do not need to be considered, as any other distribution of the uncertainty also leads to costs $M$ for the one item of type $\alpha$ that is bought in the second stage.

Let $X$ be the first-stage costs of items of type $\beta$ that are packed by some solution. The total costs of this solution becomes
\[ X + \max\{ 4Q - 2X +M , 2Q+M \} = M + 2Q + \max\{ 2Q-X, X \} \]
which are less or equal to $M+2Q$ if and only if $X=Q$, i.e., if \textsc{Partition} is a yes-instance.

\end{proof}

\end{document}